\documentclass[10pt]{amsart}

\newcounter{counterc}[section]

\newcounter{countert}[section]

\newcounter{counterd}[section]

\newcounter{counterl}[section]

\newtheorem{theorem}[countert]{Theorem}
\newtheorem{lemma}[counterl]{Lemma}
\newtheorem{conjecture}[counterc]{Conjecture}

\newtheorem{remark}{Remark}[section]

\newtheorem{claim}{Claim}[section]
\newtheorem{observation}{Observation}[section]
\theoremstyle{definition}
\newtheorem{definition}[counterd]{Definition}

\usepackage[dvips]{graphicx}

\newcommand\kplus[1]{$#1^+$}
\newcommand\kminus[1]{$#1^-$}

\newcommand{\sst}[2]{\left\{#1\,\middle|\,#2\right\}}
\def\abs#1{|#1|}

\newcommand{\mad}{mad}
\hyphenation{Murfree-sboro}

\title{Equitable coloring of sparse planar graphs}

\author{Rong Luo}
\address{Department of Mathematical Sciences, Middle Tennessee State University, Murfreesboro, TN  37132  \texttt{rluo@mtsu.edu}. Research supported in part by MTSU summer and academic year research grant 2008-2009.}
\author{Jean-S\'ebastien Sereni}
\address{C.N.R.S. (LIAFA, Universit\'e Denis Diderot),
Paris, France, and Department of Applied Mathematics (KAM), Faculty of
Mathematics and Physics, Charles University, Prague, Czech Republic. \texttt{sereni@kam.mff.cuni.cz}.}
\author{D. Christopher Stephens}
\address{Department of Mathematical Sciences, Middle Tennessee State University, Murfreesboro, TN  37132  \texttt{cstephen@mtsu.edu}.  Research supported in part by NSA grant H98230-07-1-0006.}
\author{Gexin Yu}
\address{Department of Mathematics, College of William and Mary, Williamsburg, VA,  23185
\texttt{gyu@wm.edu}. Research supported in part by the NSF grant DMS-0852452.}
\keywords{equitable coloring,  planar graphs, girth}

\begin{document}
\maketitle

\begin{abstract}
A proper vertex coloring of a graph $G$ is equitable if the sizes of
color classes differ by at most one. The equitable chromatic
threshold $\chi_{eq}^*(G)$ of $G$ is the smallest integer $m$ such
that $G$ is equitably $n$-colorable for all $n\ge m$.   We show that for planar graphs $G$ with minimum degree at least two,  $\chi_{eq}^*(G)\le 4$ if the girth of $G$ is at least $10$, and  $\chi_{eq}^*(G)\le 3$ if the girth of $G$ is at least $14$.
\end{abstract}

\pagestyle{myheadings}
\thispagestyle{plain}

\section{Introduction}
Graph coloring is a natural model for scheduling problems. Given a graph $G=(V, E)$,  a
proper vertex $k$-coloring is a mapping $f\colon V(G)\to \{1, 2, \ldots, k\}$ such that $f(u)\not=f(v)$ if $uv\in E(G)$.
The notion of equitable coloring is a
model to equally distribute resources in a scheduling problem.  A proper $k$-coloring $f$ is equitable if
\[
\abs{V_1}\le \abs{V_2}\le \ldots\le \abs{V_k}\le \abs{V_1}+1
\]
where $V_i=f^{-1}\left(i\right)$ for $i\in\{1,2,\ldots,k\}$.

The \emph{equitable chromatic number $\chi_{eq}(G)$} of $G$
is the smallest integer $m$ such that $G$ is equitably
$m$-colorable. The \emph{equitable chromatic threshold} of $G$, denoted by
$\chi_{eq}^*(G)$, is the smallest integer $m$ such that $G$ is
equitably $n$-colorable for all $n\ge m$. Note that
$\chi_{eq}(G)\le \chi_{eq}^*(G)$ for every graph $G$, and the two values may be
different: for example, $\chi_{eq}(K_{7,7})=2$ while
$\chi_{eq}^*(K_{7,7})=8$.

Hajnal and Szemer\'edi \cite{HS70} proved that $\chi_{eq}^*(G)\le
\Delta(G)+1$ for any graph $G$ with  maximum degree $\Delta(G)$.
The following conjecture made by Chen, Lih and Wu \cite{CLW94}, if
true, strengthens the above result.
\begin{conjecture}[Chen, Lih and Wu \cite{CLW94}]
For any connected graph $G$ different from $K_m, C_{2m+1}$ and $K_{2m+1, 2m+1}$, $\chi_{eq}^*(G)\le \Delta(G)$.
\end{conjecture}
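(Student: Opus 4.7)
The plan is to attack the conjecture by induction on $|V(G)|$, refining the Hajnal--Szemer\'edi argument so as to save exactly one color whenever $G$ avoids the three excluded families. Fix $\Delta:=\Delta(G)$ and an integer $k\ge \Delta$; the goal is an equitable $k$-coloring of $G$. Writing $n:=|V(G)|=qk+r$ with $0\le r<k$, the inductive engine would be to locate an independent set $I\subseteq V(G)$ of size $q$ or $q+1$ (depending on $r$) such that $G-I$ still admits the required equitable coloring on $k-1$ colors; then $I$ is appended as the last class.

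The case $k>\Delta$ should be relatively soft: there is enough slack that a direct adaptation of Hajnal--Szemer\'edi, choosing $I$ to be a maximum independent set containing a vertex of maximum degree, produces an equitable $k$-coloring without ever encountering the excluded graphs. The entire difficulty is concentrated at $k=\Delta$. Here the proposal is to begin with an equitable $(\Delta+1)$-coloring $V_1,\dots,V_{\Delta+1}$ guaranteed by Hajnal--Szemer\'edi, and to \emph{empty} the smallest class $V_{\Delta+1}$ by a sequence of Kempe-type relocations into the other $\Delta$ classes. A simple pigeonhole on degrees shows that any $v\in V_{\Delta+1}$ either has a class with no neighbor (and can be moved immediately) or has \emph{exactly} one neighbor in each $V_i$ with $i\le \Delta$, which already forces very rigid local structure on $N[v]$.

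The heart of the proof, and the step I expect to be the main obstacle, is the structural analysis of the obstructed case: when no Kempe chain permits the relocation of any $v\in V_{\Delta+1}$, the interactions between classes must be \emph{globally} tight. I would try to show that such global tightness forces $G$ to contain --- and by connectedness and extremality, to equal --- one of $K_{\Delta+1}$, an odd cycle $C_{2m+1}$, or a balanced complete bipartite graph $K_{2m+1,2m+1}$; this is precisely the point at which the three exceptional families should emerge naturally. This structural classification is exactly what has kept the conjecture open in full generality, so a realistic intermediate goal --- and the route taken by the present paper --- is to replace the global classification by a \emph{discharging} argument valid on a restricted class of graphs (sparse, planar, large girth), where the tight configurations can be excluded a priori and the induction goes through without having to characterize the extremal objects.
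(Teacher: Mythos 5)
The statement you are trying to prove is not a theorem of this paper: it is the Chen--Lih--Wu conjecture, which the paper records only as background and explicitly describes as ``still wide open in general.'' The paper contains no proof of it, and neither does your proposal. What the authors actually prove are the much weaker, girth-restricted planar statements (Theorems~\ref{thm:4-coloring} and~\ref{thm:3-coloring}), obtained by reducibility and discharging on graphs of small maximum average degree.

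The genuine gap in your attempt is the paragraph you yourself flag as ``the heart of the proof.'' For $k\ge\Delta+1$ the Hajnal--Szemer\'edi theorem already gives the equitable $k$-coloring, so the entire content of the conjecture is the single case $k=\Delta$, and there your argument reduces to the assertion that if no sequence of Kempe-type relocations can empty the class $V_{\Delta+1}$, then $G$ must be $K_{\Delta+1}$, $C_{2m+1}$, or $K_{2m+1,2m+1}$. No argument is offered for this classification; it is stated as something you ``would try to show.'' This is not a fixable local gap but the entire difficulty of the problem. Two further points are also unaddressed: (i) your pigeonhole step only explains how to move a single vertex out of $V_{\Delta+1}$, but any such move unbalances the class sizes, so the relocation must be organized as a system of simultaneous or chained moves preserving equitability --- exactly the delicate bookkeeping that makes Hajnal--Szemer\'edi hard; and (ii) the local rigidity you extract (each obstructed $v$ has exactly one neighbor in each other class) is far too weak to force the global structures $K_{\Delta+1}$, $C_{2m+1}$, $K_{2m+1,2m+1}$; many non-exceptional graphs satisfy it locally. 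Your closing sentence correctly identifies the honest state of affairs: the paper sidesteps the classification entirely by restricting to sparse graphs of large girth, where small reducible configurations can be found by counting arguments and eliminated by discharging. That is a different theorem, not a proof of the conjecture.
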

Except for some special cases, the conjecture is still wide open in general.

Another direction of research on equitable coloring is to consider special families of graphs.
For planar graphs, Zhang and Yap \cite{ZY98} proved that a planar
graph is equitably $m$-colorable if  $m\ge \Delta(G)\ge 13$. When the girth $g(G)$ is large, fewer colors are needed.
\begin{theorem}[Wu and Wang, \cite{ww08}]\label{ww-thm}
Let $G$ be a planar graph with $\delta(G)\ge 2$. \\
(a) If $g(G)\ge 26$, then $\chi_{eq}^*(G)\le 3$;\\
(b) If $g(G)\ge 14$, then $\chi_{eq}^*(G)\le 4$.
\end{theorem}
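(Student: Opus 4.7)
The plan is to prove both parts by induction on $|V(G)|$ combined with a discharging argument. Suppose for contradiction that $G$ is a minimum counterexample: a planar graph with $\delta(G)\ge 2$ and girth at least $g_0\in\{14,26\}$ that fails to be equitably $n$-colorable for some $n\ge k$, where $k=4$ when $g_0=14$ and $k=3$ when $g_0=26$. The strategy is to locate a small reducible configuration $H$ in $G$, apply the inductive hypothesis to $G-H$ to obtain an equitable $n$-coloring of $G-H$, and then extend this coloring to $G$, contradicting minimality.

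To find the reducible configuration, I would use Euler's formula together with the girth bound $g(G)\ge g_0$, which gives average degree strictly below $\frac{2g_0}{g_0-2}$ (roughly $2.33$ when $g_0=14$ and $2.17$ when $g_0=26$). I would assign each vertex $v$ the charge $d(v)-\frac{2g_0}{g_0-2}$ and each face $f$ the charge $d(f)\bigl(1-\frac{2}{g_0-2}\bigr)$, so that by Euler's formula the total charge is negative. The natural discharging rules send charge from long faces across incident $2$-vertices, and the aim is to show that unless $G$ contains a sufficiently long \emph{thread}---a path whose internal vertices all have degree $2$ in $G$---every face and every vertex ends with nonnegative charge, a contradiction.

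For the extension step, let $H$ be the set of internal vertices of such a thread $P$, and set $G'=G-H$. By induction, for every $n\ge k$ the graph $G'$ admits an equitable $n$-coloring, under which the two endpoints of $P$ receive fixed colors. I would then count the proper $n$-colorings of $P$ respecting the endpoint colors, stratified by the multiplicity vector of colors appearing along $P$. When $P$ is long enough, a standard transfer-matrix argument should show that every plausible multiplicity vector is attained, so the colors on $P$ can be chosen to force every resulting color class in $G$ to have size $\lceil|V(G)|/n\rceil$ or $\lfloor|V(G)|/n\rfloor$, producing the desired equitable $n$-coloring.

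The principal obstacle is tuning the thread length so that the discharging and the extension close up at exactly the stated girth bounds. A longer minimum-thread length buys more freedom in the extension, which is crucial for $k=3$ because the target class sizes are rigid; however, guaranteeing such threads via discharging requires a stronger girth assumption. Making both constraints meet at $g_0=14$ for $k=4$ and at $g_0=26$ for $k=3$ is the delicate part, and will likely require enriching the reducible family with auxiliary configurations (for instance, $2$-vertices adjacent to vertices of small prescribed degree) and carefully bounding, as a function of $n$, how short a thread can be while still allowing the multiplicity vector we need. The bulk of the work, and the tightness of the constants, should come from this matching.
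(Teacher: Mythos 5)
This statement is not proved in the paper at all: it is Wu and Wang's theorem, quoted from \cite{ww08}, and the present paper instead proves the stronger Theorems~\ref{thm:4-coloring} and~\ref{thm:3-coloring} (girth $10$ for four colors, girth $14$ for three), which imply it. So the only fair comparison is with the paper's proofs of those stronger results, and your outline does match their general framework: minimal counterexample, maximum average degree from Euler's formula, reducible configurations, discharging. But the outline has two genuine gaps, and they sit exactly where the substance of any such proof lives.

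First, your extension step rests on the claim that for a long enough thread $P$ every ``plausible multiplicity vector'' is attained by some proper coloring of $P$ respecting the endpoint constraints. This is false in precisely the case you cannot avoid: for $n=3$ a $4$-thread $v_0v_1v_2v_3v_4v_5$ whose deficits are $(2,1,1)$ with the doubled color equal to both $f(v_0)$ and $f(v_5)$ admits no valid extension, since the two vertices of the doubled color would have to avoid positions $1$ and $4$ and hence occupy the adjacent positions $2$ and $3$. This is why the paper's Claim~\ref{claim-5.2} excludes $3$-threads and \kplus{5}-threads but \emph{not} $4$-threads, and why Lemmas~\ref{lem:reduce-a-vertex} and~\ref{lem:reduce1thread} must treat $4$-threads inside larger configurations. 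Second, even granting the thread reductions, discharging over single threads does not close at these densities: with $\mad(G)<13/6$ (girth $26$) a $2$-vertex has deficit $1/6$, and a $3$-vertex incident with three surviving short threads can easily owe more than its surplus of $5/6$, so the absence of long threads yields no contradiction. One is forced to take as reducible configurations entire subdivided stars (a vertex together with all its incident threads, classified by the vector $T(x)$) and even pairs of such stars joined by a thread, and to verify reducibility of each --- this is the content of Claims~\ref{claim-3.2}--\ref{claim-3.5} and \ref{claim-5.4}--\ref{claim-5.5} and Lemmas~\ref{lem:reduce-a-vertex}--\ref{lem:reduce1thread}. You acknowledge that the family must be ``enriched,'' but that enrichment, together with the case analysis making the discharging balance, is the entire proof; as written the proposal defers it rather than supplies it.
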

The purpose of this paper is to improve the above two results.  Our
main results are contained in the following theorems.

\begin{theorem}\label{thm:4-coloring}
If $G$ is a planar graph with $\delta(G)\ge 2$ and $g(G)\ge 10$, then $\chi_{eq}^*(G)\le 4$.
\end{theorem}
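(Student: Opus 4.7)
The plan is a minimum-counterexample argument combining discharging with reducibility. Fix $n \ge 4$ and, for contradiction, let $G$ be a planar graph with $\delta(G)\ge 2$, $g(G)\ge 10$, and $|V(G)|$ as small as possible such that $G$ is not equitably $n$-colorable.

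First, a \emph{structural lemma}. From $g(G)\ge 10$ and Euler's formula one gets $|E(G)|\le \tfrac{5}{4}(|V(G)|-2)$, equivalently $\mad(G)<\tfrac{5}{2}$. I would assign initial charge $d(v)-\tfrac{5}{2}$ to each vertex and a compensating charge $\ell(f)-5$ to each face (so the total charge is a fixed negative constant) and design discharging rules sending charge from long faces and higher-degree vertices onto $2$-vertices. The goal is to show that if none of a short explicit list $\mathcal{C}$ of \emph{light} configurations appears in $G$, then every final charge is nonnegative, contradicting total negativity. Typical members of $\mathcal{C}$ are expected to be a $2$-vertex adjacent to another $2$-vertex (a $2$-thread of length at least two), a $3$-vertex incident with two or three $2$-threads, and small clusters of $2$-vertices pendant on a common low-degree vertex.

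Next, a \emph{reducibility lemma} for each $C\in\mathcal{C}$. The template is: select a set $S\subseteq V(G)$ of size $|S|=n$ (so that $|V(G-S)|\equiv|V(G)|\pmod n$) containing the key $2$-vertices of $C$, chosen so that $G':=G-S$ still satisfies $\delta(G')\ge 2$ and $g(G')\ge 10$, if necessary smoothing a $2$-vertex path (safe because $g(G)\ge 10$ rules out any new short cycle). By minimality, $G'$ admits an equitable $n$-coloring $\phi$; then extend $\phi$ to $G$ by placing exactly one vertex of $S$ into each color class. Whenever the greedy extension is blocked by adjacency, perform Kempe-type swaps inside $G'$ to free the required color, exploiting the locally sparse structure around $S$ provided by $C$.

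The \textbf{main obstacle} is the extension step: equitability forces exactly one vertex of $S$ into each of the $n$ classes, whereas adjacency may forbid the ``correct'' color on some $v\in S$. Overcoming this requires the configurations in $\mathcal{C}$ to provide both local flexibility (each $2$-vertex in $S$ blocks only two of the $n\ge 4$ colors) and global slack (room to swap color classes in $G'$ without breaking equitability). A secondary subtlety is preserving $\delta(G')\ge 2$, which may force $S$ to include collateral neighbors or to be accompanied by path-smoothing operations. The reason earlier work needed $g\ge 14$ is that at girth only $10$ the structural lemma yields fewer and more awkward reducible configurations, so $\mathcal{C}$ must be chosen with care to keep the discharging side and the extension side working simultaneously.
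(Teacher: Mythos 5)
Your outline follows the same general strategy as the paper (minimal counterexample, discharging against $\mad(G)<5/2$, reducible configurations built from threads around low-degree vertices), but it is only a strategy: the actual content of the proof lives in the explicit unavoidable set and in the extension lemmas, and the one concrete mechanism you do commit to is flawed. You propose to delete a set $S$ with $\abs{S}=n$ so that the extension places exactly one vertex of $S$ into each colour class. Since the theorem must hold for \emph{every} $n\ge 4$ (this is the equitable chromatic threshold, not the equitable chromatic number), $n$ can be arbitrarily large, while discharging can only produce a fixed finite list of configurations of bounded size (threads with at most a few $2$-vertices, vertices of degree at most $4$ with their incident threads). For $n$ larger than these configurations you cannot choose $S$ of size $n$ inside the configuration, and padding $S$ with distant vertices destroys both the locality of the extension argument and the guarantee $\delta(G-S)\ge 2$. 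The paper avoids this by never requiring $\abs{S}\equiv 0\pmod n$: it introduces the notion of a \emph{descending-equitably $L$-colourable} subgraph $H$, meaning $H$ is coloured so that the classes that are currently smallest in $G-H$ receive the larger shares of $H$ (e.g.\ when $\abs{H}=t<n$, one vertex of $H$ goes into each of the $t$ smallest classes and none into the others). This is the key device that makes bounded-size reducible configurations work for unbounded $n$, and it is absent from your proposal.

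A second, smaller concern is your reliance on ``Kempe-type swaps inside $G'$'' to unblock the extension. A Kempe swap changes the sizes of two colour classes of $G'$ and so generically destroys equitability of the coloring of $G'$; the paper never recolours $G'$, and instead does all repairs inside the removed subgraph $H$ (swapping colours between two deleted thread vertices, or choosing which deleted vertex receives which class), using the list framework to record the adjacency constraints imposed by $G-H$. Your discharging side (vertex and face charges versus the paper's vertex-only charges from $\mad(G)<5/2$) is an unproblematic variation, and your guess at the light configurations is in the right direction, but as written the reducibility half of the argument does not go through.
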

\begin{theorem}\label{thm:3-coloring}
If $G$ is a planar graph with $\delta(G)\ge 2$ and $g(G)\ge 14$, then
$\chi_{eq}^*(G)\le 3$.
\end{theorem}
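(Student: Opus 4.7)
The plan is to prove Theorem~\ref{thm:3-coloring} by contradiction via a minimum counterexample together with a discharging argument, following the standard template for coloring sparse planar graphs.

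First, I would assume $G$ is a counterexample of minimum order, so some integer $n\ge 3$ exists for which $G$ has no equitable $n$-coloring. Observe that $n<\abs{V(G)}$, for otherwise assigning distinct colors to distinct vertices already yields an equitable $n$-coloring. A short argument handles disconnected $G$ by treating components separately with a balancing step, so we may assume $G$ is connected. The hardest regime is expected to be small $n$ (especially $n=3$), since the rigidly prescribed class sizes $\lfloor\abs{V(G)}/n\rfloor$ and $\lceil\abs{V(G)}/n\rceil$ leave very little flexibility.

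Second, I would establish a suite of reducibility lemmas forbidding various local configurations in $G$. Given $g(G)\ge 14$ and $\delta(G)\ge 2$, the natural candidates are \emph{threads} (maximal paths whose internal vertices all have degree~$2$) of length beyond some fixed bound, together with configurations built from short threads meeting at a common low-degree vertex. For each such forbidden configuration $H$, I would excise a carefully chosen set $S\subseteq V(H)$ --- typically a pair of adjacent degree-$2$ vertices from the interior of a thread --- apply minimality to produce an equitable $n$-coloring of $G-S$, and then extend the coloring to $G$ by directly coloring $S$ together with Kempe-like swaps to rebalance the classes. To ensure the extension step always succeeds, one usually must prove a strengthened inductive statement, for example that $G-S$ admits equitable $n$-colorings subject to a prescribed partial coloring near the boundary of $S$.

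Third, I would run a discharging argument. Euler's formula applied to a planar graph of girth at least $14$ yields
\[
\sum_{v\in V(G)}\left(d(v)-\frac{7}{3}\right)<0.
\]
I assign each vertex the initial charge $\mu(v)=d(v)-7/3$ and design rules that transfer charge from vertices of degree at least~$3$ onto nearby degree-$2$ vertices along incident threads. Using the reducibility lemmas from the previous step, I would verify that after discharging every vertex has nonnegative charge, contradicting the strict inequality above.

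The main obstacle is the reducibility step. Unlike ordinary proper coloring, removing $\abs{S}$ vertices shifts the target class sizes by $\abs{S}/n$, so the extension from $G-S$ to $G$ must simultaneously respect propriety at the boundary of $S$ and preserve near-perfect balance among all $n$ color classes. Finding an inductive hypothesis strong enough to support the required Kempe-type rebalancing, yet weak enough to remain provable across all $n\ge 3$, is the crux of the argument and presumably where the bulk of the technical work of the paper is concentrated.
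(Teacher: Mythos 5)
Your outline matches the paper's overall strategy (minimal counterexample, thread-based reducible configurations, discharging with initial charge $d(v)-7/3$), but it is only an outline: the step you yourself identify as the crux --- extending an equitable $3$-coloring of $G-S$ back to $G$ --- is exactly where the proof's substance lies, and it is not supplied. Two concrete gaps. First, the one explicit reduction you propose, excising ``a pair of adjacent degree-$2$ vertices from the interior of a thread,'' fails for $n=3$: after deleting two vertices, the two color classes that must absorb them are forced by the balance condition, and both forced colors may coincide with the colors of the thread's endvertices, leaving no proper extension. This is why the paper can only forbid $t$-threads with $t=3$ or $t\ge 5$ (whose interiors can be colored using each color a prescribed number of times), while $1$-, $2$- and $4$-threads genuinely survive in the minimal counterexample and must be handled differently. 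Second, the missing idea is the notion of a \emph{descending-equitably $L$-colorable} subgraph: one deletes an entire subdivided star $H$ (a \kplus{3}-vertex together with all $2$-vertices on its incident threads), notes that the classes which must be large in $H$ are predetermined by the coloring of $G-H$ and that the leaves of $H$ carry lists of size $2$, and then proves arithmetic criteria (in terms of the numbers $a_i$ of $i$-threads at the root) under which such an $H$ can be colored with prescribed class sizes. The paper's Lemmas~\ref{lem:reduce-a-vertex} and~\ref{lem:reduce1thread} do this via an independent-set counting argument followed by a pairing/swapping step to fix list conflicts at the leaves; without them (or a substitute), the structural claims that feed the discharging (no vertex with too many loosely adjacent $2$-vertices, the classification of ``bad'' $3$-vertices with $T(x)=(1,0,2,0)$, and the constraints on their loose neighbors) cannot be established, and the final charge verification cannot be closed. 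A minor but useful simplification you also miss: since $g(G)\ge 14\ge 10$, Theorem~\ref{thm:4-coloring} already gives equitable $n$-colorability for all $n\ge 4$, so only $n=3$ needs to be treated.
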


Since $K_{1, n}$ is not equitably $k$-colorable when $n\ge 2k-1$, we
cannot drop  the requirement of $\delta(G)\ge 2$ in the theorems.
On the other hand, we do not believe that the girth conditions are
best possible.   Note that $\chi_{eq}^*(K_{2,n})=\lceil\frac{n}{3}\rceil+1$ for $n\ge 2$ and the girth of $K_{2,n}$ is $4$.    It would be interesting to find the best possible girth condition for both 3- and 4-equitable colorings.

Last, let us note that actually, we do not use planarity but only the weaker assumption that the graphs have maximum average degree less than $2.5$ for Theorem~\ref{thm:4-coloring}, and less than
$7/3$ for Theorem~\ref{thm:3-coloring}. 
We do need the girth conditions, however,  not only to control the density, but also to ensure the minimum degree is at least $2$ at all times when we do reductions.

\section{Preliminaries}
Before starting, we introduce some notation.
In the whole paper, we take ${1,2,\ldots,m}$ to be the set of
integers modulo $m$.
A \emph{$k$-vertex} is a vertex
of degree $k$; a \kplus{k}- and  a \emph{\kminus{k}-vertex} have degree at least and
at most $k$, respectively.  A {\it thread} is either (a) a path with 2-vertices
in its interior and \kplus{3}-vertices as its endvertices, or
(b) a cycle with exactly one \kplus{3}-vertex, and all other vertices of degree 2 (in other words, case (a) with endvertices equal).
A \emph{$k$-thread} 
has $k$ interior 2-vertices.  If a \kplus{3}-vertex $u$ is the endvertex
of a thread containing a 2-vertex $v$,   and the distance between
$u$ and $v$ on the  thread is $l+1$, then we say that $u$ and $v$
are \emph{loosely $l$-adjacent}. Thus ``loosely $0$-adjacent'' is the
same as the usual ``adjacent.''

All of our proofs rely on the techniques of reducibility and
discharging.     We start with a minimal counterexample $G$ to the
theorem we are proving, and the idea of the reduction is as follows.
We remove a small subgraph $H$  (for instance, a vertex of degree at
least three, together with its incident 2-threads) from the graph
$G$. By the minimality of $G$, we therefore have an equitable $k$-coloring $f$ of $G-H$, and we
attempt to extend $f$ to an equitable coloring of $G$.  This can be
done if we can equitably $k$-color $H$ itself, with some extra
conditions: namely, the color classes which should
be ``large'' in $H$ are predetermined by the existing coloring of
$G-H$; and secondly, the parts of $H$ with edges to $G-H$ have color
restrictions. If every equitable $k$-coloring of $G-H$ can be extended into an equitable $k$-coloring of $G$,
then $H$ is called a \emph{reducible configuration}.

We will handle the latter condition by means of lists of allowed
colors in $H$.  We will handle the former condition by
predetermining the sizes of the color classes.  Thus we have the following definition. 
\begin{definition}
Let $H$ be a graph with list assignment $L=\{l_v\}$, with $l_v\subseteq\{1,2,\dots,m\}$.
The graph $H$ is \emph{descending-equitably $L$-colorable} if $H$ can be $L$-colored such
 that $\abs{V_1}\geq \abs{V_2} \geq \ldots \geq \abs{V_m}\geq \abs{V_1}-1$.
\end{definition}
Note that if $G-H$ has an equitable $k$-coloring with $\abs{V_1}\leq \abs{V_2} \leq \ldots \leq \abs{V_k}\leq \abs{V_1}+1$, then $G$ is equitably $k$-colorable if $H$ is descending-equitably $L$-colorable.
Because of this, a descending-equitably $L$-colorable subgraph $H$ is a reducible configuration in $G$. 
Regarding the lists $\ell_v$, we always take $\ell_v$ to be the set of all
colors not assigned to any neighbor of $v$ in $G-H$.

The \emph{maximum average degree} of $G$ is
$\mad(G)=\max\sst{\frac{2\abs{E(H)}}{\abs{V(H)}}}{H\subseteq G}$.
A planar graph $G$ with girth at least $g$ has maximum average
degree less than $\frac{2g}{g-2}$. We let the initial charge at vertex
$v$ be $M(v)=d(v) - \frac{2g}{g-2}$. We will introduce some rules to
re-distribute the charges (discharging), and after the discharging
process, every vertex $v$ has a final charge $M'(v)$. Note that
\begin{equation}\label{eq-0}
\sum_{v\in V(G)}M'(v)=\sum_{v\in V(G)} M(v)=\sum_{v\in V(G)} \left(d(v)-
\frac{2g}{g-2}\right)<0.
\end{equation}
We will show that either we have some reducible configurations, or
the final  charges are all non-negative.   The former contradicts
the assumption that $G$ is a counterexample, and the latter
contradicts (\ref{eq-0}). 

We will prove the theorems on $3$-coloring and $4$-coloring
separately. Before the proofs,  we provide some properties useful
to equitable $m$-coloring with $m\ge 3$.

Let $m\geq 3$ be an integer. Let $G$ be a graph that is not
equitably $m$-colorable with $\abs{V} + \abs{E}$ as small as possible.
\begin{observation}\label{claim-1.1}
The graph $G$ is connected.
\end{observation}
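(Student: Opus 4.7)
The plan is to suppose, toward contradiction, that $G$ has components $G_1,\dots,G_t$ with $t\ge 2$, and to patch together equitable $m$-colorings of the $G_i$ into an equitable $m$-coloring of $G$. Each $G_i$ is a proper subgraph of $G$ and inherits all of the standing hypotheses (planarity, the girth lower bound, and $\delta\ge 2$), so the minimality of $G$ yields an equitable $m$-coloring $f_i$ of each component. Writing $|V(G_i)|=ma_i+p_i$ with $0\le p_i<m$, the coloring $f_i$ has precisely $p_i$ \emph{large} classes of size $a_i+1$ and $m-p_i$ \emph{small} classes of size $a_i$.

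The central observation is that the color names inside each component may be permuted independently: applying any permutation of $\{1,\dots,m\}$ to the labels of $f_i$ gives another equitable $m$-coloring of $G_i$, and the union of these relabelled colorings is always a proper $m$-coloring of $G$. One thus has complete freedom in choosing how large and small classes are paired across components. It suffices to handle $t=2$ and then induct, treating a previously combined equitable coloring of $G_2\cup\cdots\cup G_t$ as a single equitably colored graph to be combined with $G_1$.

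For $t=2$, write $p=p_1$ and $q=p_2$. Each combined color class has size $a_1+a_2$, $a_1+a_2+1$, or $a_1+a_2+2$ depending on whether zero, one, or two of the contributing classes are large, so the plan is to choose the pairing to avoid both extremes. If $p+q\le m$, I would pair the $p$ large classes of $G_1$ with $p$ small classes of $G_2$, the $q$ large classes of $G_2$ with $q$ small classes of $G_1$, and pair the remaining $m-p-q$ classes in small--small pairs; the combined sizes then lie in $\{a_1+a_2,\,a_1+a_2+1\}$. If $p+q>m$, set $r=p+q-m$ and form $r$ large--large pairs together with $p-r$ large($G_1$)--small($G_2$) pairs and $q-r$ small($G_1$)--large($G_2$) pairs; a direct count shows the pairing is complete with no small--small pair, and the combined sizes lie in $\{a_1+a_2+1,\,a_1+a_2+2\}$. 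Either way the combined $m$-coloring is equitable, contradicting the choice of $G$. The only real work is the arithmetic of this case analysis; there is no structural obstacle, since every relevant hypothesis on $G$ is preserved by passing to components.
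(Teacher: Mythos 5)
Your proposal is correct and follows essentially the same route as the paper: split $G$ into components, invoke minimality to equitably $m$-color the pieces, and permute color labels so that large classes in one piece meet small classes in the other. The paper compresses your two-case pairing argument into a single step by ordering the class sizes of one part decreasingly and the other increasingly before taking the union (and it splits $G$ into $H_1\cup\dots\cup H_{k-1}$ and $H_k$ rather than inducting component by component), but the underlying idea is identical.
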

\begin{proof} Let $H_1, H_2,\ldots, H_k$ be the connected
components of $G$, where $ k \geq 2$.  By the minimality of $G$, both $H
= H_1\cup H_2\cup\ldots \cup H_{k-1}$ and $H_k$ are equitably $m$-colorable.
An equitable $m$-coloring of $H$ with $\abs{V_1(H)} \geq \abs{V_2(H)} \geq
\ldots \geq  \abs{V_m(H)}$ and an equitable $m$-coloring of $H_k$ with
$\abs{V_1(H_k)}\leq \abs{V_2(H_k)} \leq \ldots \leq \abs{V_m(H_k)}$ induce an
equitable $m$-coloring of $G$, which contradicts the choice of $G$.
\end{proof}

\section{Equitable 4-coloring}
\refstepcounter{counterl}
In this section, we prove Theorem~\ref{thm:4-coloring}. We start with some useful lemmas.
\begin{lemma}\label{le:4-5-thread}
Let G be a graph and $P = y_0y_1\ldots y_ty_{t+1}$ such that $t\in \{4, 5\}$,  and  $d(y_i) = 2$ for each $i\in\{1,2,\ldots,t\}$.
Let $m \geq 4$ be an integer and $a,b \in  \{1,2,\ldots,m\}$. Let $x$ be  an arbitrary vertex  in $\{y_1,y_2,\ldots,y_t\}$. If $G - \{y_1,\ldots,y_t\}$ has an equitable $m$-coloring $f$, then $f$ can be extended to an equitable $m$-coloring of $G$ such that
$f(x) \notin\{a,b\}$ unless $m = 4$, $t = 5$, and $x\in\{y_2,y_4\}$.
\end{lemma}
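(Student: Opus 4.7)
The plan is to treat the extension as a coloring of the path $y_1, \ldots, y_t$, with the pre-colored endpoints $y_0, y_{t+1}$ acting as boundary constraints, and to keep track of the multiset of colors assigned to $\{y_1, \ldots, y_t\}$. Equitability of the final coloring of $G$ translates into an admissibility condition on this multiset: a profile $(c_1, \ldots, c_m)$ of nonnegative integers with $\sum_i c_i = t$ is admissible precisely when $s_i + c_i$ differs by at most $1$ across all $i$, where $s_i$ denotes the size of color class $i$ under $f$. Since $f$ is equitable, the $s_i$ take at most two consecutive values, so the admissible profiles can be enumerated directly, and the list is short because $t \le 5$ and $m \ge 4$.

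First, I would dispose of the cases $t \le m$, where each admissible profile assigns each color at most once, leaving at least $m - t$ colors entirely unused and hence a lot of freedom. I would pick a color for $x$ that avoids $\{a, b\}$ (and, if $x \in \{y_1, y_t\}$, also $f(y_0)$ or $f(y_{t+1})$), then complete the path coloring greedily in both directions. The subcases $t = m = 4$ and $t = m = 5$ are slightly tighter, since every color must appear on the path, but a direct enumeration of proper colorings of a short path suffices.

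The delicate case is $t = 5$, $m = 4$, where some color is necessarily doubled on the path. The admissible profiles are $(2, 1, 1, 1)$, with the doubled color required to lie in a class of minimal size under $f$, and in unbalanced cases also profiles such as $(2, 2, 1, 0)$. The doubled color must occupy a pair of nonadjacent positions in the path, and only six such pairs exist: $\{y_1, y_3\}, \{y_1, y_4\}, \{y_1, y_5\}, \{y_2, y_4\}, \{y_2, y_5\}, \{y_3, y_5\}$. For $x \in \{y_1, y_3, y_5\}$ one verifies by a direct check (using the symmetry $y_i \leftrightarrow y_{6-i}$ to halve the work) that an admissible proper coloring with $f(x) \notin \{a, b\}$ always exists.

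The main technical obstacle lies in this last case: one must coordinate the choice of doubled color, the pair of positions it occupies, the permutation of the three singleton colors on the remaining positions, and the boundary constraints at $y_0$ and $y_6$, all while ensuring $f(x) \notin \{a, b\}$. The bookkeeping here is exactly what accounts for the exception at $x \in \{y_2, y_4\}$: for these ``middle'' positions, the intersection of all constraints can over-determine $f(x)$ and force it into $\{a, b\}$, whereas for $x \in \{y_1, y_3, y_5\}$ a bit of slack always remains.
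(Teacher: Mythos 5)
Your overall strategy mirrors the paper's: settle $t\le m$ by using $t$ distinct colors, one from each of the $t$ smallest classes, and isolate the genuinely hard case $m=4$, $t=5$, where some color must be doubled on a nonadjacent pair of positions and the doubled color must come from a minimum-size class. Two points, however, keep this from being a complete proof. First, in the case $t\le m$ the instruction ``complete the path coloring greedily in both directions'' can fail as literally described: coloring outward from $x$ necessarily finishes at $y_1$ or $y_t$, and that final vertex has only one unused color left while also having to avoid $f(y_0)$ (resp.\ $f(y_{t+1})$), so it may have no legal choice. The paper avoids this by ordering the vertices so that the \emph{last} one colored is some $y_j$ with $2\le j\le t-1$ (hence with no boundary constraint); such a vertex distinct from $x$ exists since $t\ge 4$. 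Your order needs the same adjustment, or an appeal to a Hall-type argument.

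Second, and more substantially, the entire content of the lemma sits in the case $m=4$, $t=5$, $x\in\{y_1,y_3,y_5\}$, and there you only assert that ``a direct check'' succeeds. You have set the check up correctly (profile $(2,1,1,1)$ with the repeated color in a smallest class, the six nonadjacent pairs, the boundary constraints at $y_0$ and $y_6$), but you never exhibit the coloring, and this is exactly where the paper does its real work: writing $1$ for the color of a smallest class, it splits on whether $1\in\{a,b,f(y_0)\}$. If not, it places $1$ on $y_1$ and $y_3$ (so $x$ automatically avoids $\{a,b\}$) and finishes easily; if so, it places $1$ on $y_2$ and $y_4$, gives $x$ a color in $\{2,3,4\}\setminus\{f(y_0),a,b\}$ (nonempty precisely because $1$ is one of the three forbidden values), and repairs a possible conflict at $y_5$ by swapping the colors of $y_4$ and $y_5$. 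Until you carry out this (or an equivalent) finite verification, the proof of the one nontrivial case is missing. A minor side remark: for $t\le m$ it is not true that every admissible profile uses each color at most once when the classes of $f$ are unbalanced --- for instance $(2,2,0,0,0)$ can be admissible when $m=5$ and $t=4$ --- but this does not hurt you, since you only need the all-distinct profile.
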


\begin{proof}
Let $V_1, \ldots, V_m$ be the $m$ color classes of $G-S$ under $f$ with $\abs{V_1} \leq
\abs{V_2} \leq \ldots \leq \abs{V_m}$, where $S=\{y_1, y_2, \ldots, y_t\}$.
By symmetry, we may assume that $x = y_i$ with $i \leq \lceil\frac{t}{2}\rceil$.

When $m\ge t$, we extend~$f$ to~$G$ using each color in~$\{1,\dotsc,t\}$ exactly
once. Such an extension is an equitable $m$-coloring of~$G$ provided that
$f(y_1)\neq f(y_0)$ and~$f(y_t)\neq f(y_{t+1})$. We just need to take care that in
addition $f(x)\notin\{a,b\}$. To do se, arrange the vertices $y_j$ into a list
$x_1,\dotsc,x_t$ such that $x_1=x$ and $x_t\notin\{y_1,y_t\}$. We now
color~$x_1,\dotsc,x_t$ greedily (in order) with colors in~$\{1,\dotsc,t\}$, never
using a color twice. For~$x_1$, we have at least $t-3\ge1$ choices.  For
each~$x_j$ with $2\le j\le t-1$, we have at least $t-(t-2)-1=1$ choice. Finally, we have
at least $t-(t-1)=1$ choice also for $x_t$, because we took care that
$x_t\notin\{y_1,y_t\}$.

If $m<t$, then $m=4$ and $t=5$, and in this case, $x\in \{y_1, y_3\}$. If $1 \notin\{a,b,f(y_0)\}$, then assign $1$
to $y_1$ and $y_3$, assign a color $c\in \{2,3,4\}\setminus\{f(y_6)\}$ to $y_5$, and assign the other two colors in
$\{2,3,4\}\setminus\{c\}$ arbitrarily to $y_2$ and $y_4$.  
If $ 1 \in \{f(y_0), a,b\}$, then $\abs{\{2,3,4\} - \{f(y_0), a,
b\}}\geq 1$. Let $x'\in\{y_1,y_3\}\setminus\{x\}$, and $c_2 \in \{2,3,4\}\setminus\{f(y_0), a,b\}$.
Assign $1$ to $y_2$ and $y_4$, assign $c_2$ to $x$, assign a color $c_3 \in \{2,3,4\}\setminus\{c_2,f(y_0)\}$ to $x'$, and assign the
remaining color $c_4 \in \{2,3,4\}\setminus\{c_2,c_3\}$ to $y_5$. If $c_4=f(y_6)$, then swap colors on $y_5$ and $y_4$, i.e. recolor $y_4$ with $c_4$ and $y_5$ with $1$.  In either case, $f$ is extended to $G$, a contradiction.
\end{proof}

\begin{lemma}
\label{le:2-thread}
Let $xy_1y_2y$ be a 2-thread of $G$ and $m \geq 4$ be an integer.
If $G-\{y_1,y_2\}$ has an $m$-equitable coloring $f$ such that $f(x)  \not = f(y)$, then
$f$ can be extended to an equitable $m$-coloring of $G$.
\end{lemma}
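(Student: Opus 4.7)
The plan is to view the task as choosing two distinct colors $c_1,c_2\in\{1,\ldots,m\}$, with $c_1\neq f(x)$ and $c_2\neq f(y)$, to be used for $y_1$ and $y_2$ respectively, subject to preserving equitability. Denote the color classes of $f$ on $G-\{y_1,y_2\}$ by $V_1,\ldots,V_m$, let $s=\min_i\abs{V_i}$, and set $T=\sst{i}{\abs{V_i}=s}$. Since $f$ is equitable, each $\abs{V_i}$ equals $s$ or $s+1$; a direct check of the resulting size pattern after insertion shows that the extension is equitable exactly when either $c_1,c_2\in T$, or $\abs{T}=1$ and precisely one of $c_1,c_2$ is the unique element of $T$. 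So the plan reduces to producing such a valid pair.

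In the first regime, $\abs{T}\ge 2$, I would proceed as follows. If $f(x)\in T$, set $c_2:=f(x)$ (legal since $f(x)\neq f(y)$ gives $c_2\neq f(y)$) and pick $c_1$ to be any element of $T\setminus\{f(x)\}$; the case $f(y)\in T$ is symmetric; and if $\{f(x),f(y)\}\cap T=\varnothing$, any two distinct elements of $T$ work because each automatically differs from $f(x)$ and from $f(y)$.

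In the second regime, $\abs{T}=1$ with $T=\{c_0\}$, I would split on whether $c_0$ hits one of the forbidden colors. If $c_0=f(x)$, set $c_2:=c_0$ (allowed because $f(y)\neq f(x)=c_0$) and take $c_1$ to be any color in $\{1,\ldots,m\}\setminus\{c_0\}$; the case $c_0=f(y)$ is symmetric; if $c_0\notin\{f(x),f(y)\}$, set $c_1:=c_0$ (automatically $c_1\neq f(x)$) and take $c_2\in\{1,\ldots,m\}\setminus\{c_0,f(y)\}$, which is nonempty because $m\ge 4$.

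The whole argument is a short case analysis and I do not foresee any real obstacle; the only step where the hypothesis $f(x)\neq f(y)$ is essential is the $\abs{T}=1$ subcase with $c_0\in\{f(x),f(y)\}$, where without that hypothesis we would be forced to set $c_2=c_0$ (or $c_1=c_0$) while the adjacency constraint simultaneously forbids it.
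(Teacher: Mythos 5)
Your proof is correct and takes essentially the same approach as the paper: both arguments place the two new vertices into the smallest color classes and use $f(x)\neq f(y)$ to resolve the two adjacency constraints. The paper simply sorts the classes so that $\abs{V_1}\le \dots\le \abs{V_m}$, assigns colors $1$ and $2$ via a suitable bijection $\{1,2\}\to\{1,2\}$, and leaves the equitability check implicit, whereas you spell out the full characterization of which pairs of classes preserve equitability before doing the same case analysis.
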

\begin{proof}  
Let $f$ be an equitable $m$-coloring of $G -
\{y_1,y_2\}$ and let $V_1, \ldots, V_m$ be the $m$ color classes
with $\abs{V_1} \leq \abs{V_2} \leq \ldots \leq \abs{V_m}$.  If $f(x) \not =
f(y)$, then there is a bijection $\phi : \{1,2\} \to \{1,2\}$
such that $\phi(1) \not = f(x)$ and $\phi(2) \not = f(y)$. Assign
$\phi(1)$ to $y_1$ and $\phi(2)$ to $y_2$.  Hence $f$ can be
extended to $G$.
\end{proof}

\begin{lemma}\label{le:2-1-thread} 
Let $xy_1y_2y$ be a 2-thread  and $xy_3z$ be a
$1$-thread incident with $x$. Let $m\geq 4$ be an integer. If $G -
\{y_1,y_2,y_3\}$ has an equitable $m$-coloring $f$ with $f(x) \not
\in \{f(y), f(z)\}$, then $f$ can be extended to an equitable $m$-coloring of $G$.
\end{lemma}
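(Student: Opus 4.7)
\textit{Plan.} Let $V_1,\dotsc,V_m$ denote the color classes of $f$ ordered so that $|V_1|\le\cdots\le|V_m|$, set $\alpha=f(x)$, $\beta=f(y)$, $\gamma=f(z)$ (so $\alpha\notin\{\beta,\gamma\}$ by hypothesis), and let $k$ be the number of classes of minimum size. The strategy is to distinguish cases on $k$ and, in each case, color one or two of $y_1,y_2,y_3$ as a preparatory step that preserves equitability on an intermediate graph, before completing the coloring either directly or via Lemma~\ref{le:2-thread}. The key combinatorial observation is that, starting from the equitable $f$, adding a single vertex $y_3$ to a class~$c_3$ preserves equitability if and only if either all $|V_i|$ are equal or $c_3$ labels a minimum-sized class, while adding two vertices to two distinct classes (the situation after also pre-coloring one of $y_1,y_2$) behaves similarly.

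When $k\ge 3$, I choose $c_3$ from the minimum-sized color classes while avoiding $\alpha$ and $\gamma$; at least $k-2\ge 1$ such colors exist. When $k=0$, any $c_3\in\{1,\dotsc,m\}\setminus\{\alpha,\gamma\}$ works (a set of size at least $m-2\ge 2$). In both situations, the coloring of $G-\{y_1,y_2\}$ obtained from $f$ by adding $y_3$ is equitable; since $\alpha\ne\beta$, Lemma~\ref{le:2-thread} extends it to~$G$. The remaining cases $k\in\{1,2\}$ require a separate approach, because the set $S\setminus\{\alpha,\gamma\}$ of admissible colors for $y_3$ alone may be empty.

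For $k=2$, let $S=\{s_1,s_2\}$. Because $\alpha\ne\beta$, Hall's theorem yields a bijection from $\{y_1,y_2\}$ to $\{s_1,s_2\}$ respecting $c_1\ne\alpha$ and $c_2\ne\beta$; after this assignment all classes have equal size, and any $c_3\in\{1,\dotsc,m\}\setminus\{\alpha,\gamma\}$ completes an equitable extension. For $k=1$ with $S=\{s\}$, I further split on whether $s=\alpha$. If $s\ne\alpha$, I set $c_1=s$, equalizing all class sizes, and complete the coloring by choosing $c_3\in\{1,\dotsc,m\}\setminus\{\alpha,\gamma\}$ and then $c_2\in\{1,\dotsc,m\}\setminus\{\beta,s,c_3\}$, a set of size at least $m-3\ge 1$. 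If $s=\alpha$, then $s$ is forbidden on both $y_1$ and $y_3$, but $\alpha\ne\beta$ lets me set $c_2=\alpha$ to equalize the classes; I then finish by picking $c_3\in\{1,\dotsc,m\}\setminus\{\alpha,\gamma\}$ and $c_1\in\{1,\dotsc,m\}\setminus\{\alpha,c_3\}$. The main obstacle is precisely this last subcase, where the placement of the unique minimum-sized color $\alpha$ is fully forced to $y_2$; the inequality $\alpha\ne\beta$ is exactly what makes this forcing admissible.
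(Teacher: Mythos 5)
Your proof is correct, but it takes a genuinely different route from the paper's. The paper avoids all case analysis on the class sizes with one observation: if the classes are ordered $\abs{V_1}\le\dots\le\abs{V_m}$, then placing exactly one of $y_1,y_2,y_3$ into each of $V_1,V_2,V_3$ preserves equitability no matter what the size profile is (every class ends with size $n+1$ or $n+2$ if fewer than three classes had the minimum size $n$, and with size $n$ or $n+1$ otherwise). The only remaining work is a three-line assignment of $\{1,2,3\}$ to the three vertices: put $a$ on $y_2$ with $a=f(x)$ if $f(x)\in\{1,2,3\}$ and $a\neq f(y)$ otherwise, then $b\in\{1,2,3\}\setminus\{a,f(z)\}$ on $y_3$, and the leftover color $c$ on $y_1$; the hypothesis $f(x)\notin\{f(y),f(z)\}$ guarantees all adjacency constraints. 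You instead split on the number $k$ of minimum-sized classes, color $y_3$ (and sometimes one of $y_1,y_2$) first so as to rebalance the profile, and finish either directly or by invoking Lemma~\ref{le:2-thread}; in some branches two of the new vertices share a class, which the paper's construction never does. Both arguments are sound; the paper's buys brevity and uniformity, while yours makes the equitability bookkeeping fully explicit at each intermediate step and reuses Lemma~\ref{le:2-thread} as a subroutine. One expository nit: as literally defined ("the number of classes of minimum size"), $k$ can never be $0$ and equals $m$ when all classes coincide, so your $k=0$ branch is vacuous and the all-equal case is silently absorbed into $k\ge3$; this does not affect correctness since every size profile still lands in a branch whose argument is valid for it, but the definition should be stated as, say, the number of classes of size $\abs{V_m}-1$.
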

\begin{proof}
Let $V_1,V_2, \ldots, V_m$ be the $m$ color classes with $\abs{V_1} \leq
\abs{V_2} \leq \ldots \leq \abs{V_m}$. If $f(x) \in \{1,2,3\}$, then let $a =
f(x)$, so $a\neq f(y)$. Otherwise, let $a \in
\{1,2,3\}\setminus\{f(y)\}$. Let $b \in \{1,2,3\}\setminus\{a,f(z)\}$,
and $c \in \{1,2,3\}\setminus\{a,b\}$. Then $b \notin\{f(x), f(z)\}$,
$c \neq f(x)$, and $\{a,b,c\}=\{1,2,3\}$. Assigning $a$ to $y_2$, $b$ to
$y_3$ and $c$ to $y_1$ yields an equitable $m$-coloring of
$G$, a contradiction.
\end{proof}

\noindent\textbf{Proof of  Theorem~\ref{thm:4-coloring}.}
Let $G$ be a minimal counterexample to Theorem~\ref{thm:4-coloring} with $\abs{V} + \abs{E}$
as small as possible. That is, $G$ is a planar graph with $\delta(G)\ge2$
and girth at least $10$,
and $G$ is not equitably $m$-colorable for some integer $m\ge 4$ but every proper subgraph of $G$ with minimum
degree at least $2$ is equitably $m$-colorable for each $m\geq 4$.

\begin{claim}\label{claim-3.2}
The graph $G$ has no $t$-thread with $t \geq 3$, and $G$ has no
thread whose endvertices are identical.
\end{claim}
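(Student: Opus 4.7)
Plan. Both parts are handled by the standard reducibility paradigm: we remove a subset $S\subseteq V(G)$ so that $G-S$ retains $\delta\ge2$, invoke the minimality of $G$ to obtain an equitable $m$-coloring $f$ of $G-S$, and extend $f$ to $G$, contradicting the choice of $G$.

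For the first assertion, let $P=xy_1\ldots y_ty$ be a $t$-thread with $t\ge3$ and set $S=\{y_1,\ldots,y_t\}$; since $x$ and $y$ are \kplus{3}-vertices they retain degree at least two in $G-S$, so by minimality $G-S$ admits an equitable $m$-coloring $f$ with classes $V_1,\ldots,V_m$. If $t\in\{4,5\}$, Lemma~\ref{le:4-5-thread} applied with $y_0=x$ and $y_{t+1}=y$ (imposing no avoidance constraint) extends $f$ immediately. If $t=3$, we color $y_1,y_2,y_3$ by a permutation of the three smallest colors so that $f(y_1)\ne f(x)$ and $f(y_3)\ne f(y)$: at most four of the six permutations are forbidden by these two endpoint constraints, so an admissible one exists, and equitability is preserved because each of the three smallest classes gains exactly one vertex. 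If $t\ge6$, we first pick nonnegative integers $(n_1,\ldots,n_m)$ with $\sum_i n_i=t$ so that $|V_i|+n_i$ is equitable---possible since the $|V_i|$ differ by at most one and $m\ge4$---and then realize these counts as a proper coloring of the path $y_1\ldots y_t$ compatible with $f(y_1)\ne f(x)$ and $f(y_t)\ne f(y)$; since each $n_i$ is close to $t/m$, this reduces to a routine greedy path-coloring.

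For the second assertion, let $C=xy_1\ldots y_sx$ be a thread whose endvertices coincide at $x$; the girth condition forces $s\ge9$. If $d_G(x)\ge4$, set $S=\{y_1,\ldots,y_s\}$: then $\delta(G-S)\ge2$, and a greedy count-based extension around the cycle (only $y_1$ and $y_s$ are constrained, both by $f(x)$) produces the contradiction. If $d_G(x)=3$, let $z$ be the third neighbor of $x$; by the already-established first part of the claim, the thread incident to $x$ through $z$ has at most two interior $2$-vertices, so enlarging $S$ to include $x$ together with those interior vertices reaches a \kplus{3}-endvertex $u$ that loses only one edge, preserving $\delta(G-S)\ge2$, and the same extension argument applies.

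The main obstacle is the greedy extension for $t\ge6$ and for the cycle: we must simultaneously realize the color multiplicities dictated by equitability and respect the endpoint color constraints. This is a constrained path-coloring problem, tractable when $m\ge4$ and each multiplicity is close to $t/m$, but requiring some bookkeeping; the safe strategy is to ``fill up'' the minimum-size color classes first and then distribute the remaining vertices as evenly as possible across all $m$ colors.
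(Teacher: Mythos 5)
Your reduction skeleton is exactly the paper's: delete the interior $2$-vertices of the thread (and, when the thread is a cycle attached at a $3$-vertex $x$, also delete $x$ together with the interior of its third thread so that the far endvertex loses only one edge), invoke minimality, and extend. The case split $x\neq y$ versus $x=y$, and within the latter $d(x)\ge4$ versus $d(x)=3$, matches the paper, and your use of the first part inside the second is legitimate since the third thread at a degree-$3$ root cannot return to $x$.

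The gap is the one you flag yourself: for $t\ge 6$ and for every cycle-thread (which by girth $\ge 10$ always has at least $9$ interior vertices, so is never covered by Lemma~\ref{le:4-5-thread} or your $t=3$ case), you reduce the problem to ``realize prescribed color multiplicities $n_i$ as a proper coloring of a path with forbidden colors at the two ends'' and then declare it a routine greedy argument. That realization step is the entire content of the extension and is not automatic: you must verify both that nonnegative $n_i$ with $\abs{V_i}+n_i$ equitable exist, and that multiplicities with $\max_i n_i$ up to $\lceil t/m\rceil+1$ can be arranged properly on a path while avoiding the two endpoint colors; neither is hard, but neither is written. The paper avoids all of this bookkeeping with one explicit device: color $v_i$ by the residue $i\bmod m$ along the thread. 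Because the classes are indexed so that $\abs{V_1}\le\cdots\le\abs{V_m}$, the colors $1,\dots,(t\bmod m)$ receive $\lceil t/m\rceil$ new vertices and the rest receive $\lfloor t/m\rfloor$, and a short check shows the result is always equitable; the only possible conflicts are at the two ends, which are repaired by swapping $v_1$ with $v_2$ and $v_{t-1}$ with $v_t$ (these swaps cannot interfere since consecutive thread vertices get distinct residues). Adopting that modular-plus-swap device would close your gap and also let you drop the separate $t=3$, $t\in\{4,5\}$, $t\ge6$ trichotomy, since it handles all $t\ge3$ uniformly.
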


\begin{proof}
Suppose on the contrary that $G$ has a $t$-tread $P = v_0v_1\ldots v_t v_{t+1}$ with $t \geq 3$, where $d(v_0),d(v_{t+1})\ge 3$.

If $v_0 \not = v_{t+1}$ or $d({v_0})\geq 4$, consider $G_1= G -
\{v_1,\ldots, v_t\}$. Then $\delta(G_1) \geq 2$. By the minimality of
$G$, the graph $G_1$ has an equitable $m$-coloring.  Let $V_1,V_2, \ldots, V_m$ be
the $m$ color classes with $\abs{V_1} \leq \abs{V_2} \leq \ldots \leq
\abs{V_m}$. We can extend the coloring  to $G$ to obtain an equitable
$m$-coloring of $G$ as follows: first color the vertex $v_i$ by the
color $k$ where $ k \equiv i \pmod{m}$ for each $i\in\{1,2,\ldots,t\}$. Swap the colors of $v_1$ and $v_2$ if the colors of
$v_1$ and $v_0$ are the same, and further swap the colors of $v_{t-1}$ and $v_t$ if the colors of $v_t$ and $v_{t+1}$ are the same.

Now assume that $v_0 = v_{t+1}$ and $d(v_0) = 3$. Let $x \in N(v_0)\setminus\{v_1,v_t\}$.
If $d(x) \geq 3$, consider $G_2= G - \{v_0, v_1,\ldots, v_t\}$. Then $\delta(G_2) \geq 2$. By the choice
of $G$, the graph $G_2$ has an equitable $m$-coloring with color classes $V_1,V_2,\ldots,V_m$
such that $\abs{V_1} \leq \abs{V_2} \leq \ldots \leq
\abs{V_m}$.  Since $q\ge 1$, we can extend the coloring  to $G$ to obtain an equitable
$m$-coloring of $G$ as follows: first color the vertex $v_i$ by the
color $k$ where $ k \equiv i \pmod{m}$; if $0 \equiv t \pmod{m}$, swap the
colors of $v_t$ and $v_{t-1}$; if the colors of $x$ and $v_0$ are the
same, further swap the colors of $v_0$ and $v_i$, where $i\in\{1,2\}$ such that the color of $v_i$ is different from that of $v_t$ (such
a vertex $v_i$ exists since $v_0$, $v_1$ and $v_2$ are colored differently).

If $d(x) = 2$, then let $Q = x_0x_1\ldots x_qx_{q+1}$ be the thread
containing the edge $v_0x_1$ where $x_1 = x$ and $x_0 = v_0$.
Consider the graph $G_3 = G - \{v_0, x_1, \ldots, x_q, v_1,\ldots,
v_t\}$. Then $\delta(G_3) \geq 2$. By the minimality of $G$, the graph $G_3$ has
an equitable $m$-coloring with color classes $V_1,V_2, \ldots, V_m$ such that
$\abs{V_1} \leq \abs{V_2} \leq \ldots \leq \abs{V_m}$. We first
extend the coloring  to $G_2$ to obtain an equitable $m$-coloring of
$G - \{v_1,\ldots, v_t\}$ as follows: first color the vertex
$x_i$ by the color $k$ where $k \equiv {i+1} \pmod{m}$ for each
$i\in\{0, 1, \ldots q\}$; if  $x_q$ and $x_{q+1}$ have the same color,
swap the colors of $x_{q}$ and $x_{q-1}$. Next, we further extend the
coloring to $G$ similarly to the case that $d(v_0) \geq 4$.
\end{proof}

Let $x$ be a vertex of degree $d = d(x)\geq 3$. Then $x$
is  the endvertex of $d$ threads. Set $T(x) = (a_2,a_1,a_0)$ where $a_i$ is
the number of $i$-threads incident with $x$. Let $t(x)= 2a_2 + a_1$.
Claim~\ref{claim-3.2} implies that $t(x)$ is the number of $2$-vertices loosely adjacent to $x$. 

\begin{claim}\label{claim-3.3}
If $x$ is a 4-vertex, then $t(x) \leq 5$.
\end{claim}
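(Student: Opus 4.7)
The plan is to assume for contradiction that $x$ is a $4$-vertex with $t(x)=2a_2+a_1\ge 6$. Since $a_2+a_1+a_0=4$, this already forces $a_2\ge 2$, so $(a_2,a_1,a_0)\in\{(4,0,0),(3,1,0),(3,0,1),(2,2,0)\}$ and $1+t(x)\in\{7,8,9\}$.

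In each case I would let $H$ be the subgraph induced by $x$ together with all $2$-vertices loosely adjacent to $x$, so that $|V(H)|=1+t(x)$ and $H$ is a four-legged spider around $x$ whose legs contain at most two $2$-vertices each. Setting $G':=G-V(H)$, I first need $\delta(G')\ge 2$. The only vertices of $G'$ whose degree drops are the far endpoints of threads at $x$, and each such vertex is a $3^+$-vertex losing exactly one edge per incident thread (thread endvertices being distinct by Claim~\ref{claim-3.2}); the one troublesome configuration, in which two threads from $x$ share a degree-$3$ far endpoint $u$, is remedied by enlarging $H$ to contain $u$ together with the $2$-vertices on its third thread, in the spirit of the $v_0=v_{t+1}$ subcase of Claim~\ref{claim-3.2}. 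Once $\delta(G')\ge 2$, minimality of $G$ yields an equitable $m$-coloring $f$ of $G'$, and it suffices to exhibit a descending-equitable $L$-coloring of $H$ with $\ell_v:=\{1,\dots,m\}\setminus f(N_{G'}(v))$. Every $2$-vertex in $V(H)$ has at most one neighbor in $G'$, so $|\ell_v|\ge m-1\ge 3$, and $|\ell_x|\ge m-a_0\ge 3$.

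The remaining work is to construct the descending-equitable coloring of $H$ by a short case analysis on the four triples and the target class-size vector, which is essentially $(\lceil|V(H)|/m\rceil,\dots,\lfloor|V(H)|/m\rfloor)$. When $m\ge|V(H)|$ each color is used at most once and a greedy coloring that starts at $x$ succeeds thanks to the slack $|\ell_v|\ge 3$. For $m=4$ the target multiset is $(3,2,2,2)$, $(2,2,2,2)$ or $(2,2,2,1)$ according to whether $|V(H)|$ equals $9$, $8$ or $7$, and the main obstacle is the most constrained case $(a_2,a_1,a_0)=(4,0,0)$ with $m=4$: the color $c$ required to appear three times must sit on $x$ and on two of the inner $2$-vertices $y^i_2$, since it cannot be reused on any $y^i_1$, which is adjacent to $x$. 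Hence I need at least two indices $i$ with $f(y^i)\ne c$, and a pigeonhole on the four far endpoints $y^1,\dots,y^4$ shows that some such $c\in\ell_x$ always exists (in fact one with at least three suitable indices $i$ unless all four $f(y^i)$ coincide). After fixing $c$, greedily coloring the remaining six $2$-vertices from their far ends inward gives the target distribution $(2,2,2)$ on the other three colors; the other three cases admit analogous but easier treatments.
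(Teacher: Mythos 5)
Your strategy---delete the whole star $H$ consisting of $x$ and its loosely adjacent $2$-vertices, take an equitable $m$-coloring of $G-H$, and descending-equitably $L$-color $H$ in one shot---is genuinely different from the paper's proof of this claim. The paper never colors the whole configuration at once when $m=4$: it peels threads off in stages, first using Lemma~\ref{le:4-5-thread} to arrange $f(x)\notin\{f(y_1),f(y_2)\}$ on a partial graph and then restoring the remaining $2$-threads via Lemmas~\ref{le:2-thread} and~\ref{le:2-1-thread}. (Your one-shot method is essentially what the paper does use later, for $3$-coloring, in Lemma~\ref{lem:reduce-a-vertex}.) The approach is viable, but as written it has a genuine gap precisely in the case you identify as the crux. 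When $m=4$ and $(a_2,a_1,a_0)=(4,0,0)$, the color that must appear three times in $H$ is not yours to choose: it is the index of the smallest color class of $G-H$, exactly as the paper stresses (``the color classes which should be `large' in $H$ are predetermined''). Your pigeonhole produces \emph{some} color $c$ admitting two indices $i$ with $f(y^i)\neq c$, which is not the same thing. If the forced color is $1$ and $f(y^i)=1$ for three or all four far endpoints, then $1$ is excluded from all (or all but one) of the vertices $y^i_2$, and your intended triple $\{x,y^i_2,y^j_2\}$ does not exist for it. Your assertion that the tripled color ``must sit on $x$'' is also false and is what hides the problem: $\{y^1_1,y^2_1,y^3_1\}$ is an independent triple whose vertices all have full lists (their only neighbors, $x$ and $y^i_2$, lie in $H$), and in the bad case just described it is essentially the only way to place the tripled color, with $x$ then relegated to a doubleton class. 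The configuration is still reducible, but by a construction your proof does not supply.

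Two smaller points. First, you treat only $m\ge|V(H)|$ and $m=4$, but $m$ ranges over all integers at least $4$, so for $|V(H)|\in\{7,8,9\}$ the values $4<m<|V(H)|$---where several colors must be doubled, again with predetermined indices---are not covered; they are easier but not vacuous, and the same index-specificity issue must be respected there. Second, the ``troublesome configuration'' you guard against (two threads from $x$ meeting a common degree-$3$ endvertex $u$) cannot occur: two such threads would close a cycle of length at most $6$, contradicting $g(G)\ge10$; enlarging $H$ as you propose would in any case break the star structure your coloring argument relies on.
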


\begin{proof}
Suppose on the contrary that $x$ is a $4$-vertex with $t(x) \geq 6$. Claim~\ref{claim-3.2} implies that $x$ is
not incident with any $t$-thread such that $t \geq 3$. Since $t(x) \geq
6$, the vertex $x$ is incident with at least two $2$-threads. Label two $2$-threads
incident with $x$ as $xx_1z_1y_1$ and $xx_2z_2y_2$.

We first show that $x$ is incident with at most two $2$-threads. Suppose that $x$ is
incident with a third $2$-thread $xx_3z_3y_3$.
Label the fourth thread incident with $x$ as $xx_4z_4y_4$, $xz_4y_4$, or $xy_4$, depending
on whether it is a $2$-thread, a $1$-thread, or a $0$-thread. Set $A =
\sst{x, x_i, z_i}{1 \leq i \leq 4}$,  $A = \sst{x,x_j, z_i}{1 \leq i
\leq 4, 1 \leq j \leq 3}$, or $A = \sst{x, x_i, z_i}{1 \leq i \leq 3}$, depending on whether $x$ is incident with four $2$-threads, a
$1$-thread, or a $0$-thread, respectively.  Since $g(G)\ge 10$, the threads do not share endvertices other than $x$, so $\delta(G-A)\ge 2$.  By the minimality of $G$, the graph $G - A$ has an
equitable $m$-coloring $f$. 

Now if $x$ is not incident with a $0$-thread,
then by Lemma~\ref{le:4-5-thread}, $f$ can be extended to $G -
\{x_1,x_2,z_1, z_2\}$ such that $f(x) \not \in \{f(y_1),f(y_2)\}$. By
 Lemma~\ref{le:2-thread}, it can be further extended to $G -
\{x_1,z_1\}$ since $f(x) \neq f(y_2)$  and to $G$ since $f(x) \neq f(y_1)$. This contradicts the choice of $G$. 

If, on the other hand, $x$ is incident with
a $0$-thread, then first extend the coloring $f$  of $G - A$ to $G -
\{x_1, z_1\} - xy_4$ such that $f(x) \not \in \{f(y_1), f(y_4)\}$ by Lemma~\ref{le:4-5-thread}.
Since $f(x) \not = f(y_4)$, it is also an equitable $m$-coloring of
$G - \{x_1,z_1\}$. Since $f(x) \not = f(y_1)$, by
Lemma~\ref{le:2-thread}, the coloring of $G - \{x_1,z_1\}$ can be
extended to $G$, a contradiction. This proves that $x$ is incident
with at most two 2-threads. 

Therefore, $t(x)\le6$, and hence $t(x)=6$. Thus, $T(x)=(2,2,0)$.
Label the two $1$-threads incident with $x$ as
$xx_3y_3$ and $xx_4y_4$.  Then $G - \sst{x, z_1,z_2, x_i}{1 \leq i \leq
4}$ has an equitable $m$-coloring. Since $y_3x_3xx_2z_2y_2$ is a
4-thread in $G - \{x_1, z_1, x_4\}$, Lemma~\ref{le:4-5-thread} implies that
$f$ can be extended to $G -  \{x_1, z_1, x_4\}$ such that $f(x) \notin \{ f(y_1), f(y_4)\}$.
By Lemma~\ref{le:2-1-thread}, it can be
further extended to $G$. This contradicts
the choice of $G$, thereby proving Claim~\ref{claim-3.3}.
\end{proof}

\begin{claim}\label{claim-3.4}
For a 3-vertex $x$, either $t(x) \leq 2$, or $T(x) =(1,2,0)$ and $m = 4$.
\end{claim}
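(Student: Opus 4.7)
The plan is to argue by contradiction via case analysis on $T(x)$. Since $x$ is a $3$-vertex and Claim~\ref{claim-3.2} forbids threads of length at least three, the triple $T(x)=(a_2,a_1,a_0)$ satisfies $a_0+a_1+a_2=3$, and $t(x)=2a_2+a_1\ge3$ leaves the six candidates $(3,0,0)$, $(2,1,0)$, $(2,0,1)$, $(1,2,0)$, $(1,1,1)$, and $(0,3,0)$. I would show that every case except $(1,2,0)$ with $m=4$ admits a reducible configuration, contradicting the minimality of $G$. Throughout, let $A$ consist of $x$ together with the interior $2$-vertices of the threads at $x$; the girth $\ge10$ condition makes the three far endpoints $y_1,y_2,y_3$ pairwise distinct and preserves $\delta(G-A)\ge2$, so minimality yields an equitable $m$-coloring $f$ of $G-A$ which I will extend to $G$.

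For $(3,0,0)$ and $(2,1,0)$ the reduction chains Lemma~\ref{le:4-5-thread} with Lemma~\ref{le:2-thread}. In $(3,0,0)$, removing $\{a_3,b_3\}$ makes $x$ a $2$-vertex of $G-\{a_3,b_3\}$ and produces the $5$-thread $y_1b_1a_1xa_2b_2y_2$ through $x$; Lemma~\ref{le:4-5-thread} with $a=b=f(y_3)$ extends $f$ to this subgraph so that $f(x)\ne f(y_3)$ (the exception is avoided because $x$ plays the middle interior position, not $y_2$ or $y_4$), and Lemma~\ref{le:2-thread} then finishes along the remaining $2$-thread $xa_3b_3y_3$. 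Case $(2,1,0)$ is analogous: deleting the interior of one of the $2$-threads makes $x$ a $2$-vertex and creates a $4$-thread running through the other $2$-thread, $x$, and the $1$-thread, so the same pair of lemmas applies.

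For $(0,3,0)$, $(1,1,1)$, $(2,0,1)$, and $(1,2,0)$ with $m\ge5$, no sufficiently long thread passes through $x$, so I would appeal to the descending-equitably $L$-colorable framework of the Definition. In these cases $\abs{A}\in\{4,5\}$, every list satisfies $\abs{\ell_v}\ge m-1$, and at least two vertices of $A$ have the full list $\{1,\dots,m\}$ (namely $x$ itself and any interior $a_i$ whose $G$-neighbors all lie in $A$). When $m\ge\abs{A}$, I would look for an injective list colouring of $A$ into the $\abs{A}$ smallest classes of $G-A$: adjacency inside $A$ is then automatic, and the existence of the required system of distinct representatives follows from Hall's theorem, the full-list vertices guaranteeing the critical instance. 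When $m=4$ and $\abs{A}=5$ (the subcases $(2,0,1)$ and $(1,2,0)$), some class must absorb a non-adjacent pair of $A$, and this is where a careful case analysis on $f(y_1), f(y_2), f(y_3)$ becomes the main obstacle. In $(2,0,1)$ one of the pairs $\{a_1,a_2\}$ or $\{b_1,b_2\}$ always suffices, the former whenever $f(y_1),f(y_2),f(y_3)$ are not all simultaneously equal to a common color in $\{2,3,4\}$ and the latter otherwise. In $(1,2,0)$ with $m=4$, however, every non-adjacent pair of $A$ contains one of $b_1$, $a_2$, $a_3$, each of whose list excludes the corresponding $f(y_i)$, so if $f(y_1)=f(y_2)=f(y_3)$ equals the unique smallest-class color then no pair can carry that color, which is precisely the obstruction that forces the exception in the statement.
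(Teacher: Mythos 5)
Your case enumeration and conclusions are correct, and for the cases $(3,0,0)$ and $(2,1,0)$ your reduction (delete the interior of one $2$-thread, view the rest as a $4$- or $5$-thread through $x$, apply Lemma~\ref{le:4-5-thread} and then Lemma~\ref{le:2-thread}) is exactly the paper's argument. Where you diverge is in the cases with a $0$-thread, $(1,1,1)$ and $(2,0,1)$: the paper deletes the \emph{edge} $xu$ of the $0$-thread, which turns $x$ into an interior $2$-vertex of a $4$- or $5$-thread of $G-xu$, and then a single application of Lemma~\ref{le:4-5-thread} with the forbidden color $f(u)$ yields a proper equitable coloring of $G$ directly --- no balancing of class sizes is needed because no vertices are added back. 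Your alternative, an injective list coloring of $A=\{x\}\cup\{\text{interior }2\text{-vertices}\}$ into the $\abs{A}$ smallest classes via Hall's theorem (and, when $m=4$ and $\abs{A}=5$, a non-adjacent pair absorbing the smallest class), is sound and essentially re-derives in miniature the descending-equitable machinery the paper only deploys in Section~4; I checked the Hall conditions and the $(2,0,1)$ dichotomy on $f(y_1),f(y_2),f(y_3)$, and they hold. The trade-off: the paper's edge-deletion trick is shorter and avoids the class-size bookkeeping, while your SDR argument is more uniform across cases and makes the $(1,2,0)$, $m=4$ exception transparent. One small correction: in the $0$-thread cases the root $x$ does \emph{not} have a full list (it sees $f(u)$), so your parenthetical ``namely $x$ itself'' is wrong there; what saves Hall's condition for the full subset is that the interior vertex $a_1$ of a $2$-thread, whose $G$-neighbors all lie in $A$, has a full list. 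This is a misstatement rather than a gap, since such a vertex exists in every case where you invoke the argument.
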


\begin{proof}
We first prove that $T(x) \neq (1,2,0)$
if $m \geq 5$. Suppose on the contrary that $T(x) = (1,2,0)$ and $m\geq 5$. Label the two
$1$-threads incident with $x$ as $xx_1y_1$ and $xx_2y_2$ and label the
$2$-thread as $xx_3x_4y_3$. Let $A = \{x,x_1,x_2,x_3,x_4\}$.  Then
$\delta(G-A) \geq 2$ and it has an equitable $m$-coloring $f$ with color classes
$V_1,V_2, \ldots, V_m$ such that $\abs{V_1} \leq \abs{V_2}\leq \ldots \leq \abs{V_m}$.
Let $\{a,b,c,d,e\} =  \{1,2,3,4,5\}$ such that $a \neq f(y_1)$, $b \neq f(y_2)$, and $c \neq f(y_3)$.
Assigning $a$ to $x_1$, $b$ to $x_2$, $c$ to $x_4$, $d$
to $x$, and $e$ to $x_3$ yields an equitable $m$-coloring of $G$, a contradiction.

Now, we prove that if $T(x)\neq(1,2,0)$, then $t(x)\le2$. Suppose on the
contrary that $t(x)\ge3$ and $T(x)\neq(1,2,0)$.
Claim~\ref{claim-3.2} implies that $x$ is not
incident with any $t$-thread where $t \geq 3$. We first consider the
case where $x$ is not incident with a 2-thread. 
Then $T(x) = (0,3,0)$. Label the three $1$-threads incident with $x$
as $xx_iy_i$ where $d(x_i)=2$ and $d(y_i)\ge3$ for $i\in\{1,2,3\}$.
Consider the graph $G_1 = G - \{x, x_1,x_2,x_3\}$. Then
$\delta(G_1) \geq 2$, so by the minimality of $G$, the graph $G_1$ has an
equitable $m$-coloring with color classes $V_1,V_2, \ldots, V_m$ such that
$\abs{V_1} \leq \abs{V_2} \leq \ldots \leq \abs{V_m}$. Let $\{1,2,3,4\} = \{a,b,c, d\}$ such that no color in $\{a,b,c\}$ is
used by all three vertices $y_1,y_2,y_3$. An equitable $m$-coloring
of $G$ can be obtained by coloring   the vertices $x_1,x_2,x_3$ with
the colors $a,b,c$ such that no conflict occurs and coloring the
vertex $x$ with the color $d$. 
This contradiction shows that $T(x)\neq(0,3,0)$, and hence $x$ is incident
with at least one $2$-thread.

Now we consider the case that $a_2 \neq 0$. Let  $xx_1x_2y$ be
a $2$-thread incident with $x$. If $t(x) \geq 5$, then
$G-\{x_1,x_2\}$ has minimum degree 2 and has a $t$-thread $P$ that
contains $x$ for some $t\in\{4,5\}$.
Let $G_2$ be the subgraph obtained from $G - \{x_1,x_2\}$ by further deleting the degree-two vertices in $P$.
Then $G_2$ has an equitable $m$-coloring $f$. By
Lemma~\ref{le:4-5-thread}, $f$ can be extended to $G - \{x_1,x_2\}$
such that $f(x) \not = f(y)$. By Claim \thesection.3, $f$ can be
further extended to $G$. This contradiction shows that $3 \leq t(x) \leq 4$.
Since $x$ is incident with at least one $2$-thread
and $T(x) \neq (1,2,0)$, the vertex $x$ must be incident with a $0$-thread. Call it $xu$.
Since $t(x) \geq 3$, the graph $G - xu$ has a $t$-thread $P$ that
contains $x$ with $t\in\{4,5\}$. Let $G_3$ be the subgraph
obtained from $G - xu$ by further deleting the degree-two vertices
in $P$. Then $G_3$ has an equitable $m$-coloring $f$. Lemma~\ref{le:4-5-thread}
implies that $f$ can be extended to $G - xu$ such that
$f(x) \neq f(u)$. This extension of $f$ is also an
equitable $m$-coloring of $G$, a contradiction. 
This completes the proof of Claim~\ref{claim-3.4}.
\end{proof}

\medskip

A 3-vertex $x$ in $G$ is \emph{bad} if $T(x) = (1,2,0)$. Note that  if
$m\geq 5$, the configuration $T(x) = (1,2,0)$ with $d(x) = 3$ is
still reducible; thus there are no bad $3$-vertices when $m\ge 5$.
The following claim deals with reducible configurations for $m = 4$.

\begin{claim}\label{claim-3.5}
Assume $m = 4$. Let $x$ be a bad $3$-vertex and $y$ be
a vertex that is loosely $1$-adjacent to $x$.  Then\\
(1) if $d(y)=3$, then $t(y) = 1$;\\
(2) if $d(y) = 4$, then $y$ is loosely 1-adjacent to exactly one bad $3$-vertex, namely $x$.
 \end{claim}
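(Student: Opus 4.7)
\emph{Plan.} The plan is to prove both parts by contradiction by exhibiting a reducible configuration. Assume the conclusion fails, and label the threads at the bad $3$-vertex $x$ so that $x-x_3-x_4-y_3$ is its $2$-thread, $x-x_1-y$ is the $1$-thread to $y$, and $x-x_2-y_2$ is its other $1$-thread. The strategy in every case is to delete a set $S$ containing $x$, $y$, and all interior $2$-vertices of the threads at $x$ and $y$, verify $\delta(G-S)\ge 2$ (the vertices of $G-S$ adjacent to $S$ are the far endvertices of those threads, each a \kplus{3}-vertex losing only one edge), invoke the minimality of $G$ to obtain an equitable $4$-coloring $f$ of $G-S$, and extend $f$ to an equitable $4$-coloring of $G$.

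For part~(1), with $d(y)=3$ and $t(y)\ge 2$, the $1$-thread from $x$ to $y$ forces $a_1(y)\ge 1$, so Claim~\ref{claim-3.4} restricts $y$'s type to $T(y)=(0,2,1)$ (a second $1$-thread $y-z_1-w_1$ and a $0$-thread $y-w_2$) or $T(y)=(1,2,0)$ (a second $1$-thread $y-z_1-w_1$ and a $2$-thread $y-z_2-z_3-w_2$). Take $S=\{x,x_1,x_2,x_3,x_4,y,z_1\}$ in the first case and $S=\{x,x_1,x_2,x_3,x_4,y,z_1,z_2,z_3\}$ in the second, so that $S$ induces a caterpillar on $7$ or $9$ vertices. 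Since $|S|$ is small, the equitability constraint $|V_1|\le\cdots\le|V_4|\le|V_1|+1$ forces the multiset $(a_1,a_2,a_3,a_4)$ of new-vertex counts into a tight range---for example, up to permutation, $(2,2,2,1)$ when $|S|=7$ and all classes of $G-S$ already have equal size. The remainder is a short case analysis on the boundary colors $f(y_2),f(y_3),f(w_1),f(w_2)$, exploiting that the interior vertices $x$, $x_1$, $x_3$ (and $y$, $z_2$ in the $(1,2,0)$ case) carry no boundary constraints, so the color that the boundary pushes most heavily onto $S$ can always be placed there.

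For part~(2), denote the second bad $3$-vertex loosely $1$-adjacent to $y$ by $x'$, with shared $1$-thread $x'-x_1'-y$, and label the remaining threads at $x'$ and at $y$ analogously. Set $S=\{x,x',y,x_1,x_1'\}$ together with every interior $2$-vertex on the other threads at $x$, $x'$, and $y$; since $t(y)\le 5$ by Claim~\ref{claim-3.3}, $|S|\le 14$. Again $\delta(G-S)\ge 2$, the induced subgraph on $S$ is a tree, and an extension argument of the same flavour as in part~(1) applies.

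\emph{Main obstacle.} The heart of the argument in both parts is the verification that the narrow class-size multiset forced by equitability can actually be realised by a proper coloring of the caterpillar or tree on $S$ subject to the boundary color restrictions. This is the same delicacy that produced the exceptional case $T(x)=(1,2,0)$, $m=4$ of Claim~\ref{claim-3.4} and is why bad $3$-vertices exist at all; the extra local structure at $y$ assumed in the hypotheses of Claim~\ref{claim-3.5} is precisely what provides the additional flexibility needed to complete the extension.
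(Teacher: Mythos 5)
Your overall strategy---exhibit the configuration as reducible by deleting $x$, $y$, and the loosely adjacent $2$-vertices, and extend an equitable $4$-coloring of the remainder---is the same as the paper's, and your case split for part~(1) ($T(y)=(0,2,1)$ or $T(y)=(1,2,0)$, via Claim~\ref{claim-3.4}) is right. But there are two genuine gaps. First, and most seriously, the heart of the proof is missing: you assert that ``the color that the boundary pushes most heavily onto $S$ can always be placed there'' and that part~(2) follows by ``an extension argument of the same flavour,'' but this is exactly the content that has to be verified, and it is not routine. Note that which color class may receive only one (or fewer) new vertices is \emph{not} free ``up to permutation'': it is dictated by the sizes of the classes of $G-S$, so the tension between the forced class sizes and the four boundary restrictions is precisely where the argument can fail (it is why the exceptional case of Claim~\ref{claim-3.4} exists). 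The paper resolves this by extending in stages---first the optional $2$-thread at $y$, then an explicit greedy assignment of four distinct colors to $x$, $y$, $y_1$, $x_4$ guaranteeing $f(x)\notin\{f(u_1),f(u_2)\}$, then Lemma~\ref{le:2-1-thread} for the remaining $2$-thread and $1$-thread at $x$; part~(2) further needs Lemma~\ref{le:4-5-thread} and explicit color swaps in two sub-cases. Your proposal defers all of this.

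Second, your justification that $\delta(G-S)\ge2$ (``each boundary vertex loses only one edge'') is false in part~(2). With girth between $10$ and $13$, the far endvertices of the $2$-threads at the two bad vertices $x$ and $z$ can coincide: the path through the configuration between them has length $10$, so a $10$-cycle is consistent with the hypotheses. That common endvertex then loses two edges, and if it has degree $3$ its degree in $G-S$ drops to $1$. The paper closes this hole by observing that such a vertex would be a $3$-vertex incident with two $2$-threads, contradicting Claim~\ref{claim-3.4}; your write-up needs the analogous check. (In part~(1) the coincidences are indeed excluded by the girth condition, but that too should be said rather than assumed.)
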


\begin{proof} Label the threads incident with $x$ as $xx_1x_2u_1$,
$xx_3u_2$, and $xx_4y$. Here and after in the proof, we always assume that the vertices $x_i$, $y_i$ and $z_i$ have degree $2$, while the vertices $u_i$ have degree at least $3$.

 (1) Suppose that $d(y) = 3$ and $t(y) \geq 2$. Then, Claim~\ref{claim-3.4} ensures that either $t(y) = 2$, or $y$ is a bad $3$-vertex.
 If $t(y) = 2$, then $T(y) = (0,2,0)$, while if $y$ is a bad $3$-vertex,
 then $T(y) = (1,2,0)$. In either case, $y$ is incident with exactly two
 $1$-threads. Label the other 1-thread incident with $y$ as $yy_1z$.  Label the third thread
 incident with $y$ as $yu_0$ or
 $yy_2y_3u_0$ depending on  whether it is a $0$-thread or a $2$-thread.
 Set $A = \sst{x, y, y_1, x_i}{1\leq i \leq 4}$. Let $B = \emptyset$
  if $y$ is incident with a $0$-thread, and  $B = \{y_2,y_3\}$ otherwise.  Consider the graph $G_1 = G - (A\cup B)$.
  Since $g(G)\ge 10$, the vertices $u_1, u_2, z, u_0$ are distinct, thus $\delta(G_1) \geq 2$.
  By the minimality of $G$, the graph $G_1$ has an equitable $4$-coloring. Note that any $4$-equitable
coloring of $G_1$ can be extended to $G - A$, and let $f$ be an
equitable 4-coloring of $G-A$. We color $x$, $y$, $y_1$, and $x_4$ in this order as follows:
pick one color $c_1$ for $x$ in $\{1,2,3,4\}\setminus\{f(u_1), f(u_2)\}$, and $c_2$ for $y$ in $\{1,2,3,4\}\setminus\{c_1, c(y_2)\}$ if $B\neq\emptyset$ and in $\{1,2,3,4\}\setminus\{c_1, c(u)\}$ if $B=\emptyset$,  and $c_3$ for $y_1$ in $\{1,2,3,4\}\setminus\{c_1,c_2, c(z)\}$, and $c_4$ for $x_4$ in $\{1,2,3,4\}\setminus\{c_1, c_2, c_3\}$.
In such a way, $f$ can be extended to an equitable coloring of $G-\{x_1,x_2,x_3\}$ such that $f(x)\notin\{f(u_1),f(u_2)\}$.

By Lemma~\ref{le:2-1-thread}, the equitable $4$-coloring of $G - \{x_1,x_2,x_3\}$ can be further extended to $G$, which contradicts the choice of $G$, and hence prove (1).

(2)  Suppose that $d(y) = 4$ and $y$ is loosely 1-adjacent to two bad $3$-vertices $x$ and $z$.  Label the threads incident with $z$ as
$zz_1z_2u_3$, $zz_3u_4$, and $zy_1y$.  Let $u_5$ and $u_6$ be the endvertices of the two threads incident with $y$ other than the ones incident to $x$ and $z$. Set $A = \sst{x,y,z, y_1, x_i,z_j}{1 \leq i \leq 4, 1 \leq j \leq 3}$. Let $B$ be the set of 2-vertices on the two threads incident with $y$ other than $yy_1z$ and $yx_4x$.  Let $G_1 = G - (A \cup B)$. Observe that, since $g(G)\ge10$, all the named vertices are distinct except, possibly, $u_1$ and $u_3$.
Consequently, either $\delta(G_1)\ge2$, or $u_1=u_3$ and $u_1$ has degree $3$ in $G$. However, this last case contradicts Claim~\ref{claim-3.4}. Thus, $\delta(G_1) \geq 2$, and hence $G_1$ has an equitable $4$-coloring $f$ with color classes$V_1,V_2,V_3,V_4$ such that $\abs{V_1}\leq \abs{V_2} \leq \abs{V_3} \leq \abs{V_4}$.
Note that if $y$ is incident with a 2-thread, then $f$ can be extended to the $2$-vertices in the $2$-thread. This is why, in the following, we may assume without loss of generality that $y$ is not incident with a $2$-thread. 

We first consider the case where $y$ is incident with exactly two
$1$-threads: $xx_4y$ and $zy_1y$. Then $B = \emptyset$. Using
Lemma~\ref{le:4-5-thread}, we extend $f$ to $y_1yx_4x$ such that $f(y)
\notin \{f(u_5), f(u_6)\}$. Note that the colors $f(x)$, $f(x_4)$,
$f(y)$, and $f(y_1)$ are distinct. If $f(x) \in \{f(u_1), f(u_2)\}$, then
one of $f(x_4)$ and $f(y_1)$ is not in $\{f(u_1), f(u_2)\}$. If
$f(x_4) \notin \{f(u_1), f(u_2)\}$, then swap the colors of $f(x)$
and $f(x_4)$. If $f(y_1) \notin \{f(u_1), f(u_2)\}$, then swap the
colors fn $f(x)$ and $f(y_1)$. Hence we have an extension of $f$ on
$xx_4yy_1$ such that $f(x) \notin \{f(u_1), f(u_2)\}$. By
Lemmas~\ref{le:4-5-thread} and ~\ref{le:2-1-thread}, $f$ can be
further extended to $G$, a contradiction.

Now we consider the case where $y$ is incident with at least three
$1$-threads. Label the third $1$-thread as $yy_2u_5$ and the fourth
thread incident with $y$ as $yy_3u_6$ or $yu_6$ depending on whether
it is a $1$-thread or a $0$-thread. Note that either $B =\{y_2\}$ or $B = \{y_2,y_3\}$.
We first extend $f$ to $\{x_4,y,y_1\}\cup B$.
Let $a$ and $b$ be two distinct colors in
$\{1,2,3,4\}\setminus\{f(u_5),f(u_6)\}$. Assign $a$ to $y_2$.
If $B=\{y_2\}$, then assign $b$ to $y_1$, else assign $b$ to $y_3$.
Now, assign each of the colors of $\{1,2,3,4\}\setminus\{a,b\}$ arbitrarily,
making sure that both $x$ and $y_1$ are colored $1$ if $B=\{y_2,y_3\}$ and
$1\notin\{a,b\}$. This yields an equitable $4$-coloring of $G$, a contradiction.
\end{proof}

\medskip \noindent
Since $g(G)\ge 10$, we have $\mad(G)<2.5$.
Let  $M(x) = d(x) - 2.5$ be the \emph{initial charge}  of $x$
for $x \in V$. We will redistribute the charges among vertices according to the \emph{discharging rules} below.

(R1) Each $2$-vertex receives $\frac{1}{4}$ from each of the
endvertices of the thread containing it.

(R2) Each bad $3$-vertex receives $\frac{1}{4}$ from each of the
vertices that are loosely 1-adjacent to it.

Let $M'(x)$ be the charge of $x$ after application of rules R1 and R2.
The following claim shows a contradiction to (\ref{eq-0}), which implies the truth of Theorem~\ref{thm:4-coloring}.

\begin{claim}
$M'(x) \geq 0$ for each $x \in V$.
\end{claim}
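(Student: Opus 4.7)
The plan is to verify $M'(x) \ge 0$ separately for each degree class of $x$, using Claims~\ref{claim-3.2}--\ref{claim-3.5} to bound the outflow.

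For a $2$-vertex $v$, the initial charge $-\frac12$ is exactly compensated by the two $\frac14$ contributions sent by the endvertices of the thread containing $v$ under R1, giving $M'(v)=0$.

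For a $3$-vertex $x$, $M(x)=\frac12$. If $x$ is bad, then $T(x)=(1,2,0)$, so there are four $2$-vertices loosely adjacent to $x$ and two vertices loosely $1$-adjacent to $x$. Thus $x$ donates $4\cdot\frac14=1$ under R1 and receives $\frac14$ from each of its two loosely $1$-adjacent neighbors under R2, yielding $M'(x)=\frac12-1+\frac12=0$. If $x$ is not bad, then Claim~\ref{claim-3.4} yields $t(x)\le 2$, so R1 costs at most $\frac12$. If moreover $x$ has a bad $3$-vertex among its loosely $1$-adjacent neighbors, Claim~\ref{claim-3.5}(1) (with $x$ in the role of ``$y$'') forces $t(x)=1$; then $a_2=0$ and $a_1=1$, so R1 costs $\frac14$ and R2 costs $\frac14$. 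Either way $M'(x)\ge 0$.

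For a $4$-vertex $x$, $M(x)=\frac32$. Claim~\ref{claim-3.3} gives $t(x)\le 5$, so R1 costs at most $\frac54$. Claim~\ref{claim-3.5}(2) ensures that at most one bad $3$-vertex is loosely $1$-adjacent to $x$, so R2 costs at most $\frac14$; hence $M'(x)\ge\frac32-\frac54-\frac14=0$. For a $d$-vertex $x$ with $d\ge 5$, R1 costs $\frac{2a_2+a_1}{4}$ and R2 costs at most $\frac{a_1}{4}$, since loose $1$-adjacencies only arise through $1$-threads. Using $a_0+a_1+a_2=d$, the total outflow is at most $\frac{a_1+a_2}{2}\le\frac{d}{2}$, and so $M'(x)\ge d-\frac52-\frac{d}{2}\ge 0$.

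The main technical point is the non-bad $3$-vertex $x$ with $t(x)=2$ realized by two $1$-threads: naively both of its loosely $1$-adjacent neighbors could be bad, draining one full unit of charge. Claim~\ref{claim-3.5}(1), applied contrapositively, rules this out by forcing $t(x)=1$ as soon as $x$ has any bad loosely $1$-adjacent neighbor; this is precisely the structural input that makes the discharging balance.
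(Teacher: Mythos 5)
Your proof is correct and follows essentially the same route as the paper: the same degree-by-degree case analysis, the same invocations of Claims~\ref{claim-3.3}--\ref{claim-3.5} to bound the outflow (in particular using Claim~\ref{claim-3.5}(1) contrapositively to handle a non-bad $3$-vertex with a bad loose neighbor, exactly as the paper does), and the same arithmetic. The only cosmetic difference is that for $d(x)\ge 5$ you sum over the $a_i$ rather than bounding the cost per thread by $\frac{1}{2}$, which is equivalent.
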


\begin{proof}
Let $x\in V$.
If $d(x) = 2$, then $M'(x) = 2-2.5+\frac{2}{4} = 0$.

Assume $d(x) = 3$ and $x$ is not a bad vertex. If $x$ is not loosely $1$-adjacent to any bad vertex, then Claim~\ref{claim-3.4}
ensures that $t(x) \leq 2$, so $x$ sends out at most $2\times \frac{1}{4} = \frac{1}{2}$. If $x$ is loosely 1-adjacent to a bad vertex, then Claim~\ref{claim-3.5} implies that $t(x)=1$, so $x$ sends out at most $\frac{1}{4}+\frac{1}{4}=\frac{1}{2}$. In either case $M'(x)\geq 3-2.5-\frac{1}{2}=0$.
  
Assume $d(x) = 3$ and $x$ is a bad
  vertex.  Then $t(x) = 4$ and $x$ sends out $4\times \frac{1}{4}=1$. It also receives $\frac{1}{4}$
  from each loosely 1-adjacent vertex, of which there are $2$.   Hence  $M'(x) \geq  3 - 2.5 - 1 + 2\times \frac{1}{4}  = 0$.

Assume $d(x) = 4$.   Then $x$ is  loosely 1-adjacent to at most one
 bad 3-vertex by Claim~\ref{claim-3.5}. Hence $x$ sends out at most
 $ \frac{t(x)+1}{4} \leq \frac{3}{2}$
 since $t(x) \leq 5$ by Claim~\ref{claim-3.3}. Therefore $M'(x) \geq  4 - 2.5 - \frac{3}{2} = 0$.

Assume $d(x) \geq 5$. Let $y$ be a \kplus{3}-vertex that is loosely
$k$-adjacent to $x$.  If $k = 2$, then $x$ sends out $2\times
\frac{1}{4}$ via this 2-thread. If $k = 1$, then $x$ sends out
$\frac{1}{4}$ via this thread if $y$ is not a bad vertex and sends
out $2\times \frac{1}{4} = \frac{1}{2}$ via this 1-thread if $y$ is
a bad vertex. In summary, $x$ sends out at most $\frac{1}{2}$ via
each thread incident with it. Hence $M'(x) \geq d(x) - 2.5-
\frac{d(x)}{2} = \frac{d(x)}{2} - 2.5 \geq 0$.
\end{proof}

\section{Reduction lemmas for equitable 3-coloring}

We now proceed to equitable $3$-coloring. We first prove two lemmas which give conditions for the existence of reducible configurations.

A subdivided star $H$ is a graph obtained from a star by replacing the edges by paths (we will call these paths ``threads'' as well).  In our reducible configurations, we will see the natural connections:  if we take a vertex $v$ with the $2$-vertices on its incident threads in graph $G$, we obtain a subdivided star with root $v$.   So in the following two lemmas, even though we state and prove them as graphs, they are indeed part of the graphs under consideration.    

Let $a_i^v$ be the number of $i$-threads incident to vertex $v$. If it is clear from the context, we drop $v$ in the notation.  The two lemmas that follow give simple ways to identify reducible configurations using relations involving $a_i^v$. 

\begin{remark}
In the following lemma, the fact that we only assume two allowed colors at the root instead of three corresponds to the fact that we are allowing for one \kplus{3}-vertex adjacent to the root (i.e., one 0-thread incident with the root).
\end{remark}

\begin{lemma}[Reducing a vertex with at most one 0-thread]\label{lem:reduce-a-vertex}
Let $S$ be a subdivided star of order $s$ with root $x$.  Let $L=\{\ell_v\}$ be a list assignment to the vertices of $S$ such that $\ell_v=\{1,2,3\}$ if $v$ is neither a leaf nor the root, $\ell_v\subset \{1,2,3\}$ with $\abs{\ell_v}=2$ if $v$ is a leaf, and $\abs{\ell_x}\ge 2$.  Let $d(x)\le 6$ and assume $a_i=0$ unless $i\in\{0, 1,2,4\}$.  If $2a_4 + a_2\geq a_1+1+\varepsilon$ and $a_4\ge d(x)-4$, then $S$ is descending-equitably $L$-colorable, where $\varepsilon=3\lceil s/3\rceil - s$.
\end{lemma}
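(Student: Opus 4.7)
My plan is to build a descending-equitable $L$-coloring of $S$ directly, in four stages.

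First, I fix the target class sizes from $s$: writing $k=\lfloor s/3\rfloor$, the required triple $(|V_1|,|V_2|,|V_3|)$ equals $(k,k,k)$, $(k+1,k+1,k)$, or $(k+1,k,k)$ according as $\varepsilon=0,1,2$. Since $|\ell_x|\ge 2$, I can choose $f(x)\in\ell_x$; by default I take $f(x)=1$ so that the majority color class is anchored by the root, and otherwise any color in $\ell_x$.

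Second, I analyze the per-thread contribution to the class-size vector. A 1-thread's unique 2-vertex (its leaf) is colored with one of the one or two allowed colors in $\ell_y\setminus\{f(x)\}$, contributing one unit to that color class. A 2-thread $xu_1u_2$ can realize any vector $(n_1,n_2,n_3)$ with entries in $\{0,1\}$ summing to $2$, subject to the list restriction at the leaf $u_2$ and the forbidden color $f(x)$ at $u_1$. A 4-thread $xu_1u_2u_3u_4$ admits a much richer family of class-count profiles on its four $2$-vertices, such as $(2,2,0)$, $(2,1,1)$, $(1,2,1)$, $(1,1,2)$, and $(0,2,2)$; these are obtained by choosing the alternation pattern along the path and checking compatibility with $\ell_{u_4}$.

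Third, I match the accumulated per-thread contributions together with $x$'s own unit to the target triple. The hypothesis $2a_4+a_2\ge a_1+1+\varepsilon$ is exactly the flexibility budget required: each 4-thread provides up to $2$ units of adjustable color-$1$ mass, each 2-thread provides up to $1$ unit, whereas each 1-thread is essentially rigid (one forced unit into some non-$f(x)$ class) and one further unit is contributed by $x$; the $+\varepsilon$ on the right absorbs the modular excess in the target. The bounds $d(x)\le 6$ and $a_4\ge d(x)-4$ force $a_0+a_1+a_2\le 4$, so only a small number of ``rigid'' threads ever need to be handled.

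The main obstacle is the interaction with restrictive leaf lists: if the 2-element lists $\ell_v$ happen to exclude the color we would prefer to place at some leaves of 1- or 2-threads, we may need to re-choose $f(x)$ within $\ell_x$ or re-route one of the 4-threads to absorb the resulting imbalance. I expect the proof to split into finitely many sub-cases indexed by $\varepsilon$ and by the multiset of leaf lists, with each case closed by an explicit greedy construction along the threads, beginning with the most constrained 1-threads and finishing with the flexible 4-threads used as balancers.
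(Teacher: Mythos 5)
Your proposal is an outline rather than a proof: the two places where all of the difficulty of the lemma lives are precisely the places you defer. First, the assertion that the hypothesis $2a_4+a_2\ge a_1+1+\varepsilon$ is ``exactly the flexibility budget required'' is the entire content of the lemma; it is stated, not established. Second, the closing sentence --- that the argument ``splits into finitely many sub-cases indexed by $\varepsilon$ and by the multiset of leaf lists, with each case closed by an explicit greedy construction'' --- is an announcement that the work remains to be done. A concrete instance of what can go wrong: you anchor the majority class by setting $f(x)=1$ ``by default,'' but $\ell_x$ may well be $\{2,3\}$ (that is what the hypothesis $\abs{\ell_x}\ge2$ allows, corresponding to a $0$-thread at $x$). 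Then the largest class, which by the definition of descending-equitable colorability must be the class of color $1$, receives no contribution from the root, and your per-thread accounting --- in which each $4$-thread supplies ``up to $2$ units of adjustable color-$1$ mass'' subject to the constraint $f(u_1)\ne f(x)$ and to the $2$-element list at the leaf --- changes in a way you do not analyze. The paper handles exactly this by comparing, for the two colors $c,c'\in\ell_x$, maximum independent sets $S_c,S_{c'}$ through the root on which the respective color is everywhere available, proving $\abs{S_c}+\abs{S_{c'}}\ge 2+3a_4+a_2$ and then deriving a contradiction from the numerical hypotheses if both are smaller than $\lceil s/3\rceil$; this is where $2a_4+a_2\ge a_1+1+\varepsilon$ and $a_4\ge d(x)-4$ actually enter.

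Relatedly, your treatment of the hypothesis $a_4\ge d(x)-4$ --- using it only to conclude $a_0+a_1+a_2\le4$, hence ``only a small number of rigid threads'' --- does not reveal where it is genuinely needed. In the paper it does two precise jobs: it closes the counting contradiction just described, and (via the consequence $a_4\ge a_1$) it guarantees that every leaf of a $1$-thread can be paired with a distinct non-leaf neighbor of the root lying on a $4$-thread, so that a color swap can repair any conflict between a leaf's $2$-element list and the color it was tentatively assigned. That repair mechanism --- pairing each problematic leaf with a swappable partner whose other neighbors all carry the third color --- is the missing idea in your sketch: without it, ``re-route one of the $4$-threads to absorb the resulting imbalance'' has no content, since a swap fixes a list conflict only if it does not create a new properness violation, and nothing in your construction guarantees a partner with that property exists for every leaf simultaneously.
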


\begin{proof}
Let $c$ and $c'$ be two colors allowed at $x$. Let $p_i$ ($i\in\{1,2,3\}$) be the desired size of $V_i$.
Let $S_i$ ($i\in\{c,c'\}$) be a maximum independent set that contains the root,
and such that $i\in l_v$ for all $v\in S_i$.  Then no vertex of a $1$-thread is in
any of the $S_i$; each $2$-thread contains a leaf that is
in at least one of $S_i$'s; and for each $4$-thread, the leaf is in at least one of the
$S_i$'s, and the vertex at distance $2$ from the root is in both of the
$S_i$'s. Thus  $\abs{S_c}+\abs{S_{c'}}\ge 2+3a_4+a_2$. 

We wish first to find a color for the roots that may be extended to an independent set of size
$\lceil s/3\rceil$; the candidates for such a set are $S_c$ and $S_{c'}$.
Assume for a contradiction that they are both of size at most $\lceil s/3 \rceil - 1$.
Then, $2+3a_4+a_2 \le \abs{S_c}+\abs{S_{c'}}\le 2(\lceil s/3\rceil -1)=\frac{2}{3}(s+\varepsilon-3)=\frac{2}{3}(4a_4+2a_2+a_1+1+\varepsilon-3)$.
Therefore, $a_4+12\le a_2+2(a_1+\varepsilon+1)\le a_2+2(2a_4+a_2)$, that is $a_4+a_2\ge 4$.
So $a_2+2(a_1+\varepsilon+1)\ge a_4+12\ge 16-a_2$, and hence $a_1+a_2+\varepsilon\ge 7$. 
Adding $a_4$ to both sides and observing that $\varepsilon\leq 2$ yields
that $d(x)\ge a_1+a_2+a_4\geq 5+a_4$,
whence $a_4\leq d(x)-5$, which contradicts our hypothesis that $a_4\geq d(x)-4$.

Thus there exists an independent set of size at least $\lceil s/3 \rceil$ containing the root and having a common color available.
Let $c$ be that color, and fix a subset $T_c$ of $S_c$ of size exactly $p_c$, with the additional
property that $T_c$ contains all vertices of $4$-threads that are at distance exactly 2 from the root (this is possible because $a_4< s/3$). 

Let $c'$ and $c''$ be the other two colors. Without loss of generality
we may assume that $p_{c'}\geq p_{c''}$. We color with $c'$ first. By construction,
$T_c$ contains no vertices at distance $1$ or $3$ from the root.
There are $2a_4+a_2$ such vertices that are not leaves.
Assuming that $p_{c'}\geq \lceil s/3 \rceil$, we compare these quantities and find that $\lceil s/3\rceil = \frac{4a_4+2a_2+a_1+1+\varepsilon}{3}\leq \frac{4a_4+2a_2+2a_4+a_2}{3}\leq 2a_4+a_2$; thus at worst $2a_4+a_2$ is exactly ``big enough'' and we assign a preliminary set $W_{c'}$ the color $c'$, such that (a) all vertices at distance 3 from the root are in $W_{c'}$, and (b) some non-leaf vertices adjacent to the root are in $W_{c'}$ such that $W_{c'}$ has size $p_{c'}$ (this is possible because again $a_4 < s/3$).

The remaining vertices, which we shall group together in a set $W_{c''}$ are ``assigned'' the color $c''$, with the caveat that these $W_{c''}$ vertices contain the leaves, and thus may not have $c''$ in their list.

To pass, therefore, to a legitimate $L$-coloring, we pair the vertices
of $W_{c''}$ that are leaves with a subset of $W_{c'}$ as follows. For
each $z$ that is a leaf of a $2$-thread or a $4$-thread, define $z^*$ to be the neighbor of $z$.
For each $z$ that is a leaf of a $1$-thread, we may assign a unique
$z^*$ such that $z^*$ is a neighbor of the root and $z^*$ lies in a
4-thread (note that this is possible because from the second paragraph
of this proof, $a_2+a_4\geq 4$, whence $a_4\geq d(x)-4\ge a_4+a_2+a_1-4\geq 4+a_1-4=a_1$).
Now in every case $z\in W_{c''}$ and $z^*\in W_{c'}$ by construction;
swap the colors on $z$ and $z^*$ if $c''$ is not in $\ell_z$. Note
that the obtained coloring is legitimate,
because in each case the other vertices adjacent to $z^*$ received the color $c$.
\end{proof}

\begin{lemma}[Reducing two vertices connected by a 1-thread; one vertex may have one 0-thread]
\label{lem:reduce1thread} Suppose $x$ and $y$ are connected by a $1$-thread, and $d(x)+d(y)\le 8$. Let $S$ be a graph of order $s$ induced by the union of the subdivided star with root $x$ and the subdivided star with root $y$. Let $L=\{\ell_v\}$ be  a list assignment to the vertices of $S$ such that $\ell_v=\{1,2,3\}$ if $v$ is neither a leaf nor $y$, $\ell_v\subset \{1,2,3\}$ with $\abs{\ell_v}=2$ if $v$ is a leaf, and $\ell_y\subseteq \{1,2,3\}$ with $\abs{\ell_y}\geq 2$. Let $b_i=a_i^{x}+a_i^{y}$ for $i\in\{2,4\}$, and let $b_1=a_1^{x}+a_1^y-1$.
Then $S$ is descending-equitably $L$-colorable if $2b_4 + b_2\geq b_1-1+\varepsilon$ and $b_4\ge 1$, where $\varepsilon=3\lceil s/3\rceil - s$.
\end{lemma}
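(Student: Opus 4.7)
My plan is to adapt the template of Lemma~\ref{lem:reduce-a-vertex} to the case of two roots $x,y$ joined by a $1$-thread with middle vertex $z$. Since $\ell_x=\{1,2,3\}$ and $|\ell_y|\ge 2$, the intersection $\ell_x\cap\ell_y$ contains at least two colors $c,c'$. For each such color $i$, let $S_i$ be a maximum independent set of $S$ that contains both $x$ and $y$ and satisfies $i\in\ell_v$ for every $v\in S_i$. The vertex $z$ lies in neither $S_c$ nor $S_{c'}$, being adjacent to both roots; each $2$-thread contributes its leaf to at least one of the two sets; and each $4$-thread contributes its distance-$2$ interior vertex (with full list $\{1,2,3\}$) to both sets together with its distance-$4$ leaf to at least one. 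Accounting also for the roots, which lie in both sets, I would obtain $|S_c|+|S_{c'}|\ge 4+b_2+3b_4$.

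The crucial step is to show that some color $c$ yields $|S_c|\ge\lceil s/3\rceil$. I would argue by contradiction: if both sets have size at most $\lceil s/3\rceil-1$, then substituting $s=2+b_1+2b_2+4b_4$ into the sum-bound inequality produces a linear inequality which, combined with the hypothesis $b_1\le 2b_4+b_2+1-\varepsilon$, yields $b_2+b_4\ge4$. Next, the assumption $d(x)+d(y)\le 8$ together with $a_0^x=0$ (forced by $\ell_x=\{1,2,3\}$) translates into $b_1+b_2+b_4\le 7$. A short arithmetic chain using $\varepsilon\le 2$, $b_4\ge 1$, and $b_1\ge 1$ (the shared $1$-thread forces $a_1^x,a_1^y\ge 1$, hence $b_1\ge 1$) then forces $b_2$ to be simultaneously at least $9$ and at most $6$, which is impossible.

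Given such a color $c$, I would pick $T_c\subseteq S_c$ of size exactly $\lceil s/3\rceil$ containing $\{x,y\}$ and all $b_4$ distance-$2$ interior vertices of $4$-threads; the required bound $b_4+2\le\lceil s/3\rceil$ reduces to $b_1+2b_2+b_4\ge 4-\varepsilon$, which I would verify case by case on $\varepsilon\in\{0,1,2\}$ using $b_1,b_4\ge 1$. After assigning $c$ to $T_c$ and letting $c',c''$ denote the remaining colors with $p_{c'}\ge p_{c''}$, I would form a preliminary set $W_{c'}$ of size $p_{c'}$ containing all $b_4$ distance-$3$ interior vertices of $4$-threads plus enough non-leaf distance-$1$ vertices (available among $z$, the distance-$1$ interior vertices of $2$-threads, and those of $4$-threads). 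The inequality $1+b_2+2b_4\ge\lceil s/3\rceil$, which is exactly the lemma's hypothesis rewritten, guarantees that enough such vertices exist. The remaining vertices form $W_{c''}$; for each leaf $w\in W_{c''}$ whose list omits $c''$, I would pair $w$ with a suitable neighbor $w^*\in W_{c'}$ following the prescription from Lemma~\ref{lem:reduce-a-vertex} and swap their colors to obtain a proper descending-equitable $L$-coloring. The main obstacle is the contradiction step, where one must carefully track the contribution of the second root and of the shared vertex $z$ relative to the single-root case.
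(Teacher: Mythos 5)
Your proposal follows the paper's proof almost step for step: the same independent sets $S_c,S_{c'}$ with the bound $\abs{S_c}+\abs{S_{c'}}\ge 4+3b_4+b_2$, the same contradiction argument, and the same three-stage construction of $T_c$, $W_{c'}$, $W_{c''}$, including the key observation that the count $2b_4+b_2+1\ge\lceil s/3\rceil$ of available non-leaf vertices at distance $1$ or $3$ is exactly the lemma's hypothesis. The ingredients you list for the contradiction step ($b_2+b_4\ge 4$, $b_1+b_2+b_4\le 7$, $\varepsilon\le 2$, $b_4\ge 1$, together with the sum bound) do combine to give $b_1+b_2\ge 7$ against $b_1+b_2\le 6$, matching the paper; your phrasing that ``$b_2$ is at least $9$ and at most $6$'' names the wrong quantity, but this is a slip rather than an error in the method.

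There is, however, one genuine gap, in the final repair step. You propose to pair each leaf $w\in W_{c''}$ whose list omits $c''$ with a partner ``following the prescription from Lemma~\ref{lem:reduce-a-vertex}.'' For a leaf of a $1$-thread that prescription takes the partner to be a neighbor of the root lying on a $4$-thread, and it is justified there by $a_4\ge a_1$, which is extracted from $a_2+a_4\ge4$ and $a_4\ge d(x)-4$. The analogue $b_4\ge b_1$ is \emph{not} available under the present hypotheses: take $x$ of degree $3$ with the shared $1$-thread, one $4$-thread and one $2$-thread, and $y$ of degree $3$ with the shared $1$-thread and two further $1$-threads; then $b_4=1$, $b_2=1$, $b_1=3$, $s=11$, $\varepsilon=1$, so $2b_4+b_2=3\ge b_1-1+\varepsilon$ and the lemma applies, yet there are three $1$-thread leaves and only one $4$-thread to supply partners. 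The paper closes this by allowing a second kind of partner: $z^*$ may also be the neighbor of a leaf that received the color $c$ (such a $z^*$ lies in $W_{c'}$ and all of its other neighbors are colored $c$, so the swap stays proper), and the availability of enough partners is then guaranteed by comparing $b_4$ plus the number of $c$-colored leaves with $p_{c''}$. Without this additional option the pairing, and hence the extension to a proper descending-equitable $L$-coloring, can fail.
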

   
\begin{proof}
 Let $c$ and $c'$ be two colors allowed at $y$. Let $p_i$ ($i\in\{1,2,3\}$) be the desired size of $V_i$. Let $S_i$ ($i\in\{c,c'\}$) be a maximum independent set that contains $x$ and $y$, and such that $i\in \ell_v$ for all $v\in S_i$.
 Then no vertex of a $1$-thread is in any of the $S_i$; each $2$-thread contains a leaf that is
in at least one of the $S_i$'s; and for each $4$-thread incident with $x$ ($y$, respectively),
the  leaf is in at least one of the $S_i$'s, and the vertex at distance
$2$ from $x$ ($y$) is in both of the $S_i$'s. Thus
$\abs{S_c}+\abs{S_{c'}}\ge 4+3b_4+b_2$. 

We wish first to find a color for the root that may be extended to an independent set of size $\lceil s/3\rceil$; the candidates for such a set are $S_c$ and $S_{c'}$.
Assume for a contradiction that they are both of size at most $\lceil s/3 \rceil - 1$.
Then, $4+3b_4+b_2\le \abs{S_c}+\abs{S_{c'}}\le 2(\lceil s/3\rceil -1)=\frac{2}{3}(s+\epsilon-3)=\frac{2}{3}(4b_4+2b_2+b_1+2)$.
Therefore, $b_4+14\le b_2+2(b_1+\epsilon)\le b_2+2(2b_4+b_2)+2$, that is $b_4+b_2\ge 4$.
So $b_2+2(b_1+\epsilon)\ge b_4+14\ge 18-b_2$, and hence $b_1+b_2+\epsilon\ge 9$.
Adding $b_4$ to both sides and observing that $\varepsilon\leq 2$ yields
that $d(x)+d(y)+1=b_1+b_2+b_4+2\geq b_1+b_2+b_4+\varepsilon\geq
9+b_4\geq 10$, whence $d(x)+d(y)\geq 9$, which contradicts our assumption that $d(x)+d(y)\leq 8$.

Thus there exists an independent set of size at least $\lceil s/3 \rceil$ containing $x$ and $y$ and having a common color available.
Let $c$ be that color, and fix a subset $T_c$ of $S_c$ of size exactly
$p_c$, with the additional property that $T_c$ contains all vertices of
$4$-threads incident with $x$ or $y$ that are at distance exactly $2$ from $x$ or $y$, respectively (this is possible because $b_4< s/3$). 

Let $c'$ and $c''$ be the other two colors. Without loss of generality
we may assume that $p_{c'}\geq p_{c''}$. We color with $c'$ first. By
construction, $T_c$ contains no vertices at distance $1$ or $3$ from either $x$ or $y$.
There are $2b_4+b_2+1$ such vertices that are not leaves. Assuming that $p_{c'}\geq \lceil s/3 \rceil$,
we compare these quantities and find that $\lceil s/3\rceil =
\frac{4b_4+2b_2+b_1+2+\varepsilon}{3}\leq
\frac{4b_4+2b_2+2+2b_4+b_2+1}{3}\leq 2b_4+b_2+1$; thus at worst
$2b_4+b_2+1$ is exactly ``big enough'' and we assign a preliminary set
$W_{c'}$ the color $c'$, such that (a) all vertices that are on a
$4$-thread incident with $x$ or $y$ at a distance of $3$ from $x$ or $y$, respectively, are in $W_{c'}$; and (b) some non-leaf vertices adjacent to $x$ or $y$ are in $W_{c'}$ such that $W_{c'}$ has size $p_{c'}$ (this is possible because again $b_4 < s/3$).

The remaining vertices, which we shall group together in a set $W_{c''}$ are ``assigned'' the color $c''$, with the caveat that these $W_{c''}$ vertices contain the leaves, and thus may not have $c''$ in their list.

To pass, therefore, to a legitimate $L$-coloring, we pair the vertices
of $W_{c''}$ that are leaves with a subset of $W_{c'}$ as follows. For
each $z$ that is a leaf of a $2$-thread or a $4$-thread, define $z^*$ to
be the neighbor of $z$. For each $z$ that is a leaf of a $1$-thread, we
may assign a unique $z^*$ such that $z^*$ is a neighbor of $x$ or $y$
and $z^*$ lies in a $4$-thread, or such that $z^*$ is the neighbor of a
leaf colored by $c$ (note that this is possible because
$b_4+|\{\mbox{leafs colored $c$}\}|=p_c\geq p_{c''}$).
Now in every case $z\in W_{c''}$ and $z^*\in W_{c'}$ by construction;
swap the colors on $z$ and $z^*$ if $c''$ is not in $\ell_z$. Note that
the obtained coloring is legitimate, because in each case the other vertices adjacent to $z^*$ received the color $c$.
\end{proof}

\section{Equitable $3$-coloring}

In this section, we prove Theorem~\ref{thm:3-coloring}.

By Theorem~\ref{thm:4-coloring}, we only need to show that planar graphs
with minimum degree at least $2$ and girth at least $14$ are  equitably
3-colorable. Suppose not, and let $G$ be a counterexample with
$\abs{V} + \abs{E} $ as small as possible. The proof of the following
claim is essentially a line by line copy
of the proof of Claim~\ref{claim-3.2}, so we omit it.

\begin{claim}\label{claim-5.2}
$G$ has no $t$-thread  where $t = 3$ or $t \geq 5$, and no thread with the same endvertices.
\end{claim}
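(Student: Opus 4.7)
The plan is to mirror the proof of Claim~\ref{claim-3.2}, replacing $m$ by $3$ and the hypothesis ``$t\geq 3$'' by ``$t=3$ or $t\geq 5$.'' Suppose for a contradiction that $G$ contains a $t$-thread $P=v_0v_1\dotsb v_tv_{t+1}$ of one of the forbidden types; the argument splits into two cases that parallel those of Claim~\ref{claim-3.2}.

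In the first case, when $v_0\neq v_{t+1}$ or $v_0=v_{t+1}$ with $d(v_0)\geq 4$, set $G_1=G-\{v_1,\dotsc,v_t\}$. Removing these $t$ interior $2$-vertices preserves $\delta(G_1)\geq 2$, so by the minimality of $G$ there is an equitable $3$-coloring $f$ of $G_1$ with $\abs{V_1}\leq\abs{V_2}\leq\abs{V_3}$. I extend $f$ by coloring each $v_i$ with the color $k$ satisfying $k\equiv i\pmod{3}$ (using $\{1,2,3\}$), and then I perform an end-swap on $\{v_1,v_2\}$ (respectively $\{v_{t-1},v_t\}$) whenever the cyclic coloring conflicts with $f(v_0)$ (respectively $f(v_{t+1})$). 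For $t=3$ the two swap regions share $v_2$, but a direct verification shows that the resulting coloring remains proper; for $t\geq 5$ the two swap regions are disjoint, which is exactly what fails for the excluded length $t=4$. Swaps do not change the multiset of colors used on $v_1,\dotsc,v_t$, which is $(k,k,k)$, $(k+1,k,k)$, or $(k+1,k+1,k)$ depending on $t\pmod 3$; a short case analysis based on $\abs{V_3}-\abs{V_1}\leq 1$ shows that adding any of these triples to $(\abs{V_1},\abs{V_2},\abs{V_3})$ keeps the maximum and the minimum of the new triple within~$1$, so the extension is equitable.

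In the second case, $v_0=v_{t+1}$ with $d(v_0)=3$, I follow the two subcases at the end of the proof of Claim~\ref{claim-3.2}. Let $x\in N(v_0)\setminus\{v_1,v_t\}$. If $d(x)\geq 3$, I delete $\{v_0,v_1,\dotsc,v_t\}$ and apply minimality to the resulting graph, which still has minimum degree at least~$2$; otherwise I follow the thread $Q=x_0x_1\dotsb x_qx_{q+1}$ containing $v_0x_1$ with $x_0=v_0$ and $x_1=x$, and delete $\{v_0,x_1,\dotsc,x_q,v_1,\dotsc,v_t\}$. In either subcase I extend the equitable coloring first along $Q$ (if present) using the same cyclic-with-swaps scheme, then along $P$, and finally pick a color for $v_0$ in $\{1,2,3\}$ distinct from the colors forced at its already-colored neighbors; since at most two colors are forbidden, such a choice exists, and any residual conflict between $v_0$ and an endpoint of $P$ is cleared by one further swap involving an interior $2$-vertex adjacent to $v_0$, exactly as in Claim~\ref{claim-3.2}.

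The main obstacle, compared to Claim~\ref{claim-3.2}, is the much tighter equitability budget available with only three colors: the count $(k+1,k+1,k)$ produced by $t\equiv 2\pmod{3}$ (which includes the critical case $t=5$) leaves essentially no slack, and one has to check that for each of the three admissible shapes of $(\abs{V_1},\abs{V_2},\abs{V_3})$ the resulting sizes still differ by at most one. The exclusion of $t=4$ is precisely what lets us sidestep the unique length at which both end-swaps are simultaneously required but collide in the middle of~$P$; every other permitted length accommodates the forced end-swaps without breaking either properness or equitability.
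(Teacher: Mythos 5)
Your proposal is correct and follows essentially the same route as the paper, which explicitly omits the proof of this claim on the grounds that it is a line-by-line copy of the proof of Claim~\ref{claim-3.2}; your adaptation (cyclic coloring mod $3$ with end-swaps, plus the two subcases for a thread with identical endvertices) is exactly that copy, and your added verifications --- that the overlapping swaps at $t=3$ stay proper, and that the color-count triples $(q,q,q)$, $(q+1,q,q)$, $(q+1,q+1,q)$ preserve equitability --- are the only details the transfer to $m=3$ genuinely requires. The only slightly loose point is your phrasing that $v_0$ is ``colored last'' with a free choice in the identical-endvertices case; in the paper $v_0$ receives the color $k\equiv 0 \pmod 3$ as part of the cyclic scheme (so the class sizes stay predetermined) and conflicts at $x$ are repaired by a swap with $v_1$ or $v_2$, but since you defer to Claim~\ref{claim-3.2} for exactly this step, the argument goes through.
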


Similarly to Section 3, for a vertex $x$, let $T(x)=(a_4, a_2, a_1, a_0)$, where $a_i$ is the number of $i$-threads incident to $x$, and let $t(x)=4a_4+2a_2+a_1$.

\begin{claim}\label{claim-5.4}
Let $x$ be a vertex with $3\le d(x)\le 6$. Then \\
(a) if $d(x)=3$, then either $t(x) \leq 4$ or $T(x) =(1, 0, 2, 0)$;\\
(b) if $d(x)=4$, then $t(x)\le 7$ or $T(x)=(2, 0, 0, 2)$;\\
(c) if $d(x)\in \{5, 6\}$, then $a_4\le d(x)-2$.
\end{claim}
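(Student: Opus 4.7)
My plan is to argue by contradiction: assume $G$ is the minimal counterexample and exhibit, for each $T(x)$ that violates one of (a)--(c), a reducible configuration, contradicting minimality. For each violating $T(x)$ I invoke either Lemma~\ref{lem:reduce-a-vertex} (reducing only $x$ with its incident threads) or Lemma~\ref{lem:reduce1thread} (reducing $x$ together with a vertex $u$ joined to it by a $1$-thread). Throughout, Claim~\ref{claim-5.2} gives $a_i=0$ for $i\notin\{0,1,2,4\}$.

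Part (c) is cleanest and I would prove it first. If $d(x)\in\{5,6\}$ and $a_4\geq d(x)-1$, then $a_0+a_1+a_2\leq 1$, so $a_1+1+\varepsilon\leq 4$ while $2a_4\geq 2(d(x)-1)\geq 8$, and the hypothesis $a_4\geq d(x)-4$ is automatic. Hence all conditions of Lemma~\ref{lem:reduce-a-vertex} are satisfied for the subdivided star rooted at $x$, yielding a descending-equitable $L$-coloring and hence an equitable $3$-coloring of $G$, the desired contradiction. The list at $x$ has the required two allowed colors because at most one neighbor of $x$ (the far end of the $0$-thread, if present) survives in $G-H$.

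For part (b) I would enumerate all $(a_4,a_2,a_1,a_0)$ summing to $4$ with $4a_4+2a_2+a_1\geq 8$ and $T(x)\neq(2,0,0,2)$. A short case-by-case check---over the fourteen violating compositions, e.g.\ $(4,0,0,0)$, $(3,1,0,0)$, $(2,2,0,0)$, $(2,0,2,0)$, $(1,2,1,0)$, $(1,1,2,0)$, $(0,4,0,0)$ and the rest---shows that every one satisfies $a_0\leq 1$ and $2a_4+a_2\geq a_1+1+\varepsilon$. Hence Lemma~\ref{lem:reduce-a-vertex} disposes of each case exactly as in (c), and there are no residual configurations to treat.

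Part (a) is the most delicate. Of the ten violating compositions summing to $3$, Lemma~\ref{lem:reduce-a-vertex} handles seven directly, namely $(3,0,0,0)$, $(2,1,0,0)$, $(2,0,1,0)$, $(2,0,0,1)$, $(1,2,0,0)$, $(1,1,0,1)$, and $(0,3,0,0)$. The remaining three---$(1,1,1,0)$, $(1,0,1,1)$, and $(0,2,1,0)$---each fail the numerical inequality of Lemma~\ref{lem:reduce-a-vertex} by exactly $1$. For these I would select the (or a) $1$-thread $xy_1u$ incident to $x$ and apply Lemma~\ref{lem:reduce1thread} to the union of the subdivided stars at $x$ and $u$, designating as ``$y$'' in the lemma the endpoint carrying a $0$-thread (if any). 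The relaxed inequality $2b_4+b_2\geq b_1-1+\varepsilon$ is easier to verify, and $b_4\geq 1$ holds automatically in $(1,1,1,0)$ and $(1,0,1,1)$ since $a_4^x=1$. The bound $d(x)+d(u)\leq 8$ is verified by appealing to part~(c) (already proved) when $d(u)\in\{5,6\}$ and to the already-handled sub-cases of (a) and (b) when $d(u)\in\{3,4\}$. The main obstacle is the case $T(x)=(0,2,1,0)$ with $u$ carrying no $4$-thread: here Lemma~\ref{lem:reduce1thread} does not apply at $x$, and one must argue that the structure around $u$ itself matches a previously-treated reducible configuration (from (a) or (b) at $u$), thereby locating a reducible configuration elsewhere in $G$. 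Ordering the argument so that the densest $T$'s are treated before the sparser ones makes this inductive step go through.
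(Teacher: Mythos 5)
Your overall strategy --- enumerate the violating types $T(x)$ and kill each one with Lemma~\ref{lem:reduce-a-vertex} --- is exactly the paper's, and your treatments of (b) and (c) are correct. The problem is in part (a): the three compositions you single out, $(1,1,1,0)$, $(1,0,1,1)$ and $(0,2,1,0)$, do \emph{not} fail the numerical hypothesis of Lemma~\ref{lem:reduce-a-vertex}; they satisfy it with equality. For $(1,1,1,0)$ the subdivided star has $s=1+4+2+1=8$ vertices, so $\varepsilon=1$ and $2a_4+a_2=3=a_1+1+\varepsilon$; for each of $(1,0,1,1)$ and $(0,2,1,0)$ one gets $s=6$, $\varepsilon=0$ and $2a_4+a_2=2=a_1+1+\varepsilon$. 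Since the lemma only asks for $2a_4+a_2\ge a_1+1+\varepsilon$ (and $a_4\ge d(x)-4$ is automatic when $d(x)=3$, while $a_0\le 1$ in all three cases), all ten violating compositions in (a) are reducible directly. This is precisely how the paper argues: it packages the computation as the single inequality $t(x)\le 3a_1+2\varepsilon$ and rules out the few surviving pairs $(a_4,a_2)$, never invoking Lemma~\ref{lem:reduce1thread} in this claim.

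The detour you build for those three cases is where a genuine gap appears. Lemma~\ref{lem:reduce1thread} requires $d(x)+d(u)\le 8$, and nothing you have proved bounds $d(u)$: part (c) only restricts $a_4^u$ when $d(u)\in\{5,6\}$ (and already $3+6=9>8$), and $u$ may well have degree $7$ or more. Moreover $b_4\ge 1$ fails for $T(x)=(0,2,1,0)$ whenever $u$ carries no $4$-thread, and your fallback --- that ``the structure around $u$ itself matches a previously-treated reducible configuration'' --- is unsupported: a high-degree vertex $u$ incident only to $0$- and $1$-threads violates nothing established so far, so no reducible configuration need exist near $u$. Fortunately none of this machinery is needed once the arithmetic above is corrected.
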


\begin{proof}
Assume for a contradiction that either (i) $d(x)=3$ and $t(x)\ge 5$, or (ii) $d(x)=4$ and $t(x)\ge 8$, or (iii) $d(x)\in \{5, 6\}$ and $a_4\ge d(x)-1$.

Note that if $d(x)\in \{3, 5, 6\}$, $a_0\le 1$. If $d(x)=4$, then
$a_0>1$ and $t(x)\ge 8$ only if $a_4=a_0=2$, in which case
$T(x)=(1,0,2,0)$, as wanted.
So we may assume that $a_0\le 1$, thus Lemma~\ref{lem:reduce-a-vertex} applies.

Let $H$ be the subgraph of $G$ induced by $x$ and its loosely adjacent $2$-vertices.
Then $G-H$ has an equitable $3$-coloring $f$, and we may assume that $f$ cannot be extended to $H$. Thus by Lemma~\ref{lem:reduce-a-vertex},
$2a_4+a_2\le a_1+\epsilon$, 
where $\epsilon=3\left\lceil\frac{\abs{V(H)}}{3}\right\rceil-\abs{V(H)}$. Since $t(x)=4a_4+2a_2+a_1$,
\begin{equation}\label{eq-10}
t(x)=2(2a_4+a _2)+a_1\le 3a_1+2\epsilon.
\end{equation}

Let $d(x)=3$.  By (\ref{eq-10}), $a_1\ge 1$. Then $(a_4, a_2)\in
\{(1,1), (2, 0), (1, 0), (0,2)\}$. If $(a_4, a_2)=(1,1)$, then
$\epsilon=1$ and $a_1=1$, a contradiction to (\ref{eq-10});  if $(a_4, a_2)=(2, 0)$, then $a_1=1$ and $\epsilon=2$, a contradiction to (\ref{eq-10}) again; and if $(a_4,a_2)=(0,2)$, then $a_1=1$ and $\epsilon=0$, another contradiction to (\ref{eq-10}). So $(a_4, a_2)=(1, 0)$. It follows that $a_1=2$ or $a_1=a_0=1$. If $a_1=a_0=1$, then $\epsilon=0$, a contradiction to (\ref{eq-10}). Therefore $a_1=2$ and $T(x)=(1, 0, 2, 0)$.

Let $d(x)=4$. By (\ref{eq-10}), $a_1\ge 2$.  Then $(a_4, a_2)\in
\{(1,1), (2, 0)\}$. Consequently, $a_1=2$. If $(a_4, a_2)=(1,1)$, then
$\epsilon=0$, a contradiction to~(\ref{eq-10}).
If $(a_4, a_2)=(2,0)$, then $\varepsilon=1$, again a contradiction
to~(\ref{eq-10}).

If $d(x)\in \{5, 6\}$, then $a_1\le 1$, which contradicts~(\ref{eq-10}).
\end{proof}

\medskip

We call a $3$-vertex $x$ \emph{bad} if $T(x) = (1, 0, 2,0)$

\begin{claim}\label{claim-5.5}
Let $x$ be a bad $3$-vertex.  Let $y$ be a $3$-vertex that is loosely
$1$-adjacent to $x$. Then\\
(a) $y$ is not incident to a $t$-thread where $t \geq 2$; hence $t(y) \leq 3$; and\\
(b) $x$ is the only bad $3$-vertex to which $y$ is loosely $1$-adjacent. 
\end{claim}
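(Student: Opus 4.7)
My approach is to argue both parts by contradiction. Part~(a) follows from a direct application of Lemma~\ref{lem:reduce1thread}, while part~(b) splits into two subcases, the harder of which demands an ad-hoc three-vertex reduction. The girth bound $g(G)\ge 14$ is used throughout to ensure that all the external $\ge\!3$-vertices attached to the configurations are distinct, so that the reduced graph has minimum degree at least $2$ and minimality can be invoked.

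For part~(a), I would suppose $y$ has a $t$-thread with $t\ge 2$; by Claim~\ref{claim-5.2} this forces $t\in\{2,4\}$. The plan is to apply Lemma~\ref{lem:reduce1thread} to $(x,y)$ via the $1$-thread that they share, with $y$ in the role of the vertex that may have a $0$-thread. Since $d(x)+d(y)=6\le 8$ and $b_4\ge a_4^x=1$, only the inequality $2b_4+b_2\ge b_1-1+\varepsilon$ remains to check. Using $T(x)=(1,0,2,0)$, this simplifies to $2+2a_4^y+a_2^y\ge a_1^y+\varepsilon$, where $s=7+a_1^y+2a_2^y+4a_4^y$. A short enumeration of the allowed configurations $T(y)$ with $d(y)=3$, $a_1^y\ge 1$, and $a_4^y+a_2^y\ge 1$, computing $\varepsilon=3\lceil s/3\rceil-s$ in each case, verifies the inequality and yields the reducible configuration.

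For part~(b), assume $y$ is also loosely $1$-adjacent to another bad $3$-vertex $z$. By~(a), $y$'s threads are all $0$- or $1$-threads, so $T(y)\in\{(0,0,2,1),(0,0,3,0)\}$. If $T(y)=(0,0,2,1)$, Lemma~\ref{lem:reduce1thread} applied to $(x,y)$ with $y$ carrying the $0$-thread gives $s=9$, $\varepsilon=0$, $b_4=1$, $b_2=0$, $b_1=3$, and the inequality $2\ge 2$ holds tightly. If instead $T(y)=(0,0,3,0)$, both pair-reductions (to $(x,y)$ and to $(y,z)$) fail (each demanding $2\ge 5$), so I would work directly on the configuration $T$ consisting of $x$, $y$, $z$ and all their loosely adjacent $2$-vertices, giving $|T|=16$. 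I would first verify $\delta(G-T)\ge 2$: girth $\ge 14$ rules out every coincidence of the external $\ge\!3$-vertices of $T$ except possibly the identification of the far endpoints of the two $4$-threads incident with $x$ and $z$ (which would form a $14$-cycle), but by Claim~\ref{claim-5.4}(a) any such coincidence point must have degree at least~$4$. Then $G-T$ admits an equitable $3$-coloring $f$ by minimality, and since $|T|\equiv 1\pmod 3$ the extension of $f$ to $T$ must have descending class sizes $(6,5,5)$ while respecting the five list constraints inherited from $f$ at the leaves of $T$. Since $T$ is a caterpillar on $16$ vertices with $5$ leaves, I would construct the extension by assigning a common color to $x,y,z$ and filling in the remaining $2$-vertices via a brief case analysis on the colors that $f$ gives to the five external endpoints.

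The main obstacle is the last subcase of~(b): the Section~4 reduction lemmas are no longer adequate when $y$ carries three pendant $1$-threads (two to bad vertices), and the extension of $f$ must be built by hand, simultaneously balancing the five list constraints against the precise $(6,5,5)$ descending-equitable class-size target.
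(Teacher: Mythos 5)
Your overall strategy coincides with the paper's: part (a) is exactly the paper's argument (apply Lemma~\ref{lem:reduce1thread} across the $1$-thread $xy$ with $b_4=a_4^y+1$, $b_2=a_2^y$, $b_1=a_1^y+1$, and check the inequality $2a_4^y+a_2^y\ge a_1^y+\varepsilon-2$ over the few admissible $T(y)$; the paper organizes this by splitting on $a_1^y$ rather than enumerating $T(y)$, but the content is identical), and part (b) likewise removes the full configuration on $x,y,z$ and their loosely adjacent $2$-vertices, with the same girth-$14$/Claim~\ref{claim-5.4}(a) argument for $\delta(G-H)\ge2$ at the one possible coincidence of the two $4$-thread endpoints. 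One genuine (and valid) divergence: for the subcase $T(y)=(0,0,2,1)$ you dispatch the configuration by a second application of Lemma~\ref{lem:reduce1thread} (your numbers $b_4=1$, $b_2=0$, $b_1=3$, $\varepsilon=0$, giving the tight inequality $2\ge2$, check out), whereas the paper handles this case by the same explicit hand-coloring it uses for $T(y)=(0,0,3,0)$ (with class sizes $(5,5,5)$ on $15$ vertices); your route is arguably cleaner since it reuses machinery already proved. The only soft spot is the final subcase $T(y)=(0,0,3,0)$: you correctly identify the $16$-vertex caterpillar, the $(6,5,5)$ target, and the five leaf-list constraints, but you leave the actual extension as ``a brief case analysis on the colors of the five external endpoints,'' which naively is $3^5$ cases. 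The paper avoids any case analysis here with a small device worth adopting: it fixes one explicit good partition ($U_3'=\{x,y,z,x_2,z_2\}$, etc.) and pre-designates disjoint \emph{swappable pairs} ($w_1/w_2$, $w_3/w_4$, $x_3/x_4$, $z_3/z_4$, $y_1/z_1$), each pair consisting of a leaf and an adjacent non-leaf in different color classes, so that any leaf conflict is repaired by one swap without disturbing class sizes or the other pairs. Your plan would go through, but as written this step is asserted rather than executed.
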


\begin{proof}
(a)~Suppose that $y$ is incident with a $t-$thread where $t\ge 2$. Let
$H$ be the subgraph of $G$ induced by $x$, $y$, and all $2$-vertices
loosely adjacent to $x$ or $y$.  We apply Lemma~\ref{lem:reduce1thread}
to $H$, observing that $b_4=a_4^y+1$, $b_2=a_2^y$, and $b_1=a_1^y+1$. We
find that $H$ is reducible if $2b_4+b_2\ge b_1-1+\varepsilon$, or
equivalently if $2a_4^y + a_2^y \geq a_1^y +\epsilon -2$.

Now if $a_1^y=1$, then $2a_4^y + a_2^y \geq  1 \geq
\epsilon -1 =  a_1^y + \epsilon -2$. Hence, since $y$ is adjacent to a
$t$-thread with $t\ge 2$, it must be that $a_1^y=2$.  Thus we may reduce
$H$ if $2a_4^y+a_2^y\ge \varepsilon$.  This is true if
$a_4^y>0$.  Thus we may assume that $a_2^y=1$.  But in this case
$\abs{H}=11$, $\varepsilon=1$, and $a_2^y\ge \varepsilon$. Thus $H$ is
reducible.

(b)~Suppose now that $y$ is also loosely 1-adjacent another bad
$3$-vertex $z$. Let $H$ be the subgraph induced by $x$, $y$, $z$, and all
the $2$-vertices loosely adjacent to $x$, $y$, or $z$.  Let $G'$ be $G-H$.
Note that by the girth condition, $x$ and $z$ may be loosely adjacent
to the same vertex $w$ through the $4$-threads, but in that case, $w$
cannot be a $3$-vertex, since otherwise it violates
Claim~\ref{claim-5.4}(a). So $\delta(G')\ge 2$, and thus  $G'$ is
equitably 3-colorable. We need to extend this equitable $3$-coloring to
all of $G$.  

We will $3$-color $H$, and for $i\in\{1,2,3\}$ let $U_i$ be the set of
vertices of $H$ colored by $i$.  For the coloring to remain equitable,
we need $\abs{U_1}\geq \abs{U_2}\geq \abs{U_3}\geq \abs{U_1}-1$.  Call a
proper coloring of $H$ ``good'' if it satisfies $\abs{U_1}\geq
\abs{U_2}\geq \abs{U_3}\geq \abs{U_1}-1$.

The union of $x,y,z$ together with the $1$-threads at $x$ and $z$ forms
a $9$-path; let us label it as $v_1w_1xw_2yw_3zw_4v_2$.
Label the $4$-thread at $x$ as $xx_1x_2x_3x_4v_3$, and label the $4$-thread at $z$ as $zz_1z_2z_3z_4v_4$.

First suppose that $y$ is adjacent to a $0$-thread.  Then $\abs{U_i}$
should be $5$ for all $i$, and some color is disallowed at $y$ by its
adjacency in $G$ to a vertex of $G'$.
Assume without loss of generality that $3$ is an allowed color at $y$.
Let $U_1'=\{w_1, w_4, x_1, x_3, z_3\}$, $U_2'=\{w_2, w_3, x_4, z_1,
z_4\}$, and $U_3'=\{x,y,z,x_2,z_2\}$. This is a good coloring of $H$, so
it only remains to repair any conflicts at the leaves of $H$ when $H$ is
attached to $G'$.  Notice that if there is a conflict with the leaf
adjacent to $w_1$, we may simply swap the colors on $w_1$ and $w_2$.
Likewise we may pair $w_3$ with $w_4$, $x_3$ with $x_4$, and $z_3$ with
$z_4$, swapping any pair if there is a conflict at the associated leaf.
Any such swap results in another good coloring of $H$, and swapping any pair does not interfere with any other pair.  Thus we may obtain appropriate $U_i$ in this case.

If $y$ is incident to a third 1-thread with 2-vertex $y_1$, then we keep
the $U_i'$s as before and color $y_1$ by $1$. Note that $y_1$ and $z_1$
form another swappable pair if there is a conflict at $y_1$.

By (a), $y$ is not incident to any $t$-thread with $t\geq 2$, so the proof is complete.
\end{proof}

\medskip \noindent
Since  $g(G)\ge 14$, we have $\mad(G)<\frac{7}{3}$. Let $M(x) = d(x) - \frac{7}{3}$ be the {\it initial charge}  of $x$ for $x \in V$. We will re-distribute the charges among vertices according to the {\it discharging rules} below:

(R1) Every \kplus{3}-vertex sends $\frac{1}{6}$ to each loosely adjacent $2$-vertex;

(R2) Every \kplus{3}-vertex sends $\frac{1}{6}$ to each loosely $1$-adjacent bad $3$-vertex.

\medskip

Let $M'(x)$ be the final charge of $x$.  The following claim shows a contradiction to (\ref{eq-0}), which in turn implies the truth of Theorem~\ref{thm:3-coloring}.

\begin{claim}
For each $x\in V$, $M'(x)\ge 0$.
\end{claim}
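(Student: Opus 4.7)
The argument is a case analysis on $d(x)$, verifying that the total charge $x$ sends out via (R1) and (R2) never exceeds its initial surplus $d(x) - 7/3$ (plus, if $x$ is a bad $3$-vertex, the charge received via (R2)). A $2$-vertex has initial charge $-1/3$ and receives $1/3$ via (R1), finishing at $0$. For $d(x) \ge 3$, write $b(x)$ for the number of bad $3$-vertices loosely $1$-adjacent to $x$, so the outgoing charge is exactly $(t(x) + b(x))/6$, and note $b(x) \le a_1^x$ since each loosely $1$-adjacent vertex lies on a distinct $1$-thread.

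The heart of the proof is $d(x) = 3$, which splits on whether $x$ is bad. If $x$ is bad, then $T(x) = (1, 0, 2, 0)$ gives $t(x) = 6$; the two endvertices of $x$'s $1$-threads feed $x$ a total of $2/6$ via (R2), and Claim~\ref{claim-5.5}(a) rules out bad-to-bad $1$-thread adjacency (a bad vertex is incident with a $4$-thread, forbidden by (a)), so $b(x) = 0$ and $M'(x) = 2/3 - 1 + 1/3 = 0$. If $x$ is a non-bad $3$-vertex, Claim~\ref{claim-5.4}(a) gives $t(x) \le 4$; if additionally $x$ is loosely $1$-adjacent to some bad vertex, Claim~\ref{claim-5.5}(a,b) forces $a_4^x = a_2^x = 0$ and $b(x) \le 1$, so $t(x) + b(x) \le 3 + 1 = 4$. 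Either way $M'(x) \ge 2/3 - 4/6 = 0$.

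For the remaining degrees the strategy is uniform: apply Claim~\ref{claim-5.4}(b) for $d(x) = 4$ (handling $T(x) = (2,0,0,2)$ separately, where $a_1 = 0$ forces $b(x) = 0$) and Claim~\ref{claim-5.4}(c) for $d(x) \in \{5,6\}$ to bound the thread profile, then maximise $t(x) + b(x) \le 4a_4 + 2a_2 + 2a_1$ over the allowed profiles. A short calculation gives $t(x) + b(x) \le 10$ for $d(x) = 4$, $\le 16$ for $d(x) = 5$, and $\le 20$ for $d(x) = 6$, each sufficient to make $M'(x) \ge 0$. For $d(x) \ge 7$, no structural claim is required beyond $b(x) \le a_1^x$: the crude bound $t(x) + b(x) \le 4\,d(x)$ already yields $M'(x) \ge (d(x) - 7)/3 \ge 0$.

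The main obstacle is extracting enough from Claim~\ref{claim-5.5} at $3$-vertices to close the balance in both directions simultaneously: one must observe both that a bad vertex has no bad neighbour along any $1$-thread (so bad vertices never lose via (R2), and can therefore afford to exhaust their surplus on (R1)) and that any non-bad $3$-vertex forced to donate via (R2) has such constrained thread structure that (R1) costs it strictly less than the $4 \cdot (1/6)$ a general non-bad $3$-vertex might pay. Once these two observations are in hand, the remaining cases reduce to elementary linear bounds on the thread profile $(a_4, a_2, a_1, a_0)$.
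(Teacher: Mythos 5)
Your proof is correct and follows essentially the same route as the paper's: a case analysis on $d(x)$ that uses Claim~\ref{claim-5.4} and Claim~\ref{claim-5.5} to bound the outgoing charge $(t(x)+b(x))/6$, with the same key observations that a bad $3$-vertex has no bad loose $1$-neighbour and that a non-bad $3$-vertex donating via (R2) has $t(x)\le 3$. The only cosmetic difference is that for $d(x)\ge 4$ the paper rewrites the estimate as $M'(x)\ge\frac{1}{3}\left(2d(x)-7-a_4+a_0\right)$ instead of maximizing $t(x)+b(x)$ directly; the two computations agree.
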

\begin{proof}
If $d(x)=2$, then $M'(x)=2-\frac{7}{3}+2\cdot \frac{1}{6}=0$.

If $d(x)=3$, then if $x$ is bad, it gains $\frac{1}{6}$ from each of the
two loosely $1$-adjacent vertices, thus $M'(x)\ge 3-\frac{7}{3}-6\cdot
\frac{1}{6}+2\cdot \frac{1}{6}=0$; if $x$ is not bad and is not loosely
$1$-adjacent to a bad vertex, then $M'(x)\ge 3-\frac{7}{3}-4\cdot
\frac{1}{6}=0$ by Claim~\ref{claim-5.4}; if $x$ is not bad and is loosely $1$-adjacent to a bad
$3$-vertex, then $t(x)\le 3$ by Claim~\ref{claim-5.5}, thus $M'(x)\ge 3-\frac{7}{3}-3\cdot
\frac{1}{6}- \frac{1}{6}=0$.

For $d(x)\ge 4$, note that $M'(x) \ge d(x) -
\frac{7}{3} - \frac{(4a_4 + 2a_2 + 2a_1)}{6}$. Since $d(x)=a_4+a_2+a_1+a_0$,
\[
M'(x)\ge \frac{1}{3}(2d(x)-7-a_4+a_0)\,.
\]

If $d(x)\ge 7$, then $M'(x)\ge (d(x)-a_4+a_0)/3\ge 0$.  If $d(x)\in \{5, 6\}$,then Claim~\ref{claim-5.4}
implies that $a_4\le d(x)-2$, thus $M'(x)\ge 0$.

Assume now that $x$ has degree $4$.  To show that $M'(x)\ge 0$, it suffices to show that $a_4\le a_0+1$, which is true, since Claim~\ref{claim-5.4} ensures that $a_4\le 1$, or $(a_4,a_0)=(2,2)$.
\end{proof}

\section{Acknowledgement}
We are heartily grateful to the anonymous referees for their very helpful comments.

\end{document}